\newtheorem{theorem}{Theorem}
\newtheorem{proposition}[theorem]{Proposition}
\newtheorem{lemma}[theorem]{Lemma}
\newtheorem{corollary}[theorem]{Corollary}
\newtheorem{definition}[theorem]{Definition}
\newtheorem{remark}{Remark}
\newcommand{\N}{\mathbb{N}}
\newcommand{\R}{\mathbb{R}}
\newcommand{\TT}{\mathbb{T}}
\newcommand{\Z}{\mathbb{Z}}
\newcommand{\di}{\displaystyle}
\newcommand{\Id}{\mbox{\rm Id}}
\newcommand{\T}{\boldsymbol{T}}
\newcommand{\fonctionsansdef}[3]{\begin{array}[t]{lrcl}#1 :&#2 &\longrightarrow &#3 \end{array}}
\date{}
\begin{document}
\setcounter{tocdepth}{3}
\title[Continuous versus discrete structures I -- Discrete embeddings]{Continuous versus discrete structures I -- Discrete embeddings and ordinary differential equations}
\author{Jacky Cresson$^{1,2}$ and Fr\'ed\'eric Pierret$^1$}
\address{$^1$ SYRTE, Observatoire de Paris, 77 avenue Denfert-Rochereau, 75014 Paris, France}
\keywords{Discrete embedding formalisms, numerical methods.}

\begin{abstract}
We define an abstract framework called {\it discrete finite differences embedding} which can be used to obtain discrete analogue of formal functional relations in the spirit of category theory. For ordinary differential equations we exhibit three main discrete associate : the differential, integral or variational discrete embeddings which corresponds to classical numerical scheme including variational integrators.  
\end{abstract}

\maketitle

\noindent {\tiny $^1$ SYRTE, Observatoire de Paris, 77 avenue Denfert-Rochereau, 75014 Paris, France}

\noindent {\tiny $^2$ Laboratoire de Math\'ematiques Appliqu\'ees de Pau, Universit\'e de Pau et des Pays de l'Adour,}

\noindent {\tiny  avenue de l'Universit\'e, BP 1155, 64013 Pau Cedex, France}

\begin{tiny}
\tableofcontents
\end{tiny}

\section{Introduction}\label{section1}

Many different fields of Mathematics and Applied Mathematics deal with the construction of a {\it discrete} analogue of a continuous notion. The aim of this article is to provide an abstract framework to deal with discrete version of ordinary differential equations. In an informal way, we are looking for a "{\it functor}" from the set $\mbox{\bf ODE}$ of ordinary differential equations to the set $\mbox{\bf FDE}$ of finite differences equations 
$$\mbox{\bf ODE} \longrightarrow \mbox{\bf FDE}$$
in a way inspired by {\it category theory} \cite{awodey}. It is probably possible to give a complete categorical formulation of our approach even if we only keep in mind the general strategy.\\

Our objective is not only theoretic. Using this abstract "arrow" that we call {\it discretisation} in the following, we want to precise in particular two points : 
\begin{itemize}
\item On how many objects (sets,mappings, etc) depends a discretisation ? 
\item A discretisation being fixed, to obtain a better understanding of which kind of properties and structures from the continuous objects are preserved in the discrete setting.
\end{itemize}
Of course, many answers already exist in the literature. In particular, the one dealing with Numerical Analysis of differential equations and more specifically {\it Geometric numerical integration} \cite{lubi}. However, as we will see, some discrete analogues defined by these theory are surprisingly far from being satisfying at least from the point of view of structure and formulation, even for well known results as for example discrete analogue of Lagrangian and Hamiltonian systems which is the origin of the present work. In this paper, we make a systematic comparison of our formulation with those obtained in geometric numerical integration as exposed in \cite{lubi} and also some results obtained in the context of variational integrators as exposed in \cite{mars}.\\

In order to define the "arrow" called discretisation, we follow the philosophy of {\it embedding formalisms} developed for example in (\cite{cre1},\cite{cre2},\cite{cgp},\cite{cd}) by defining a general concept of {\it discrete embedding}. An account of this formalism for problems related to discretisation was depicted in the articles \cite{cgp} and \cite{cmt} using {\it time scale calculus}. Here, we provide a more conceptual framework, which allows us to recover many classical methods of numerical analysis in the same setting modifying only some projection and lift maps which are inherent to the definition of a discrete embedding then generalizing our previous work in this direction. Moreover, it allows us to revisit classical definitions and results in numerical analysis. \\

As already pointed out, some part of our approach is related to the {\it time-scale} calculus point of view in particular to results as exposed for example in \cite{cmt} and L. Bourdin (\cite{bourdin1},\cite{bourdin2}). However, the time-scale calculus coincide with standard discretisation procedure of the first order in term of approximation, due to the fact that the time-scale derivatives reduces to the forward and backward Euler derivative by construction. This point is important since in numerical analysis one is interested not only to give a clear connexion between a continuous object and a discrete one but also to obtain a hight order of approximation between the discrete solution and the continuous one. Our formalism naturally extends to such a setting and we give examples of order $2$ and $3$.\\

The paper is organized as follows : In the first part, we define the discrete analogue of continuous objects like functions and differential or integral operators. The main point is to give an explicit connexion between the discrete and continuous case. This is done using some particular mappings that we call discretisation and interpolation in the following. As a consequence, we are able to give without any computations on sums or classical methods of rearranging terms new formulations of classical results (discrete integration by part,discrete fundamental theorem of differential calculus, etc). In a second part, we use the previous formalism to define the discrete embedding of a formal functional. This abstract setting allows us to cover very different objects like differential equations, integral equations or Lagrangian functional. We then describe the three natural discrete way to obtain a discrete analogue of a differential equation : the differential, integral and variational case. Each of these procedure lead to different discrete realization of the same equations.

\part{Discrete versus continuous objects}

\section{Notations}

Let $N\in\N$ and $a,b\in \R$ with $a<b$ and let $h=(b-a)/N$. We denote by $\TT$ the subspace of $\R$ defined by $\TT=h\Z \cap [a,b]$ where $h\Z=\{hz | z\in\R \}$. The elements of $\TT$ are denotes by $t_k$ for $k=0,...,N$. 
The indicator function of a subset $A \subset X$ is denoted by $1_A$ and defined by $1_A  (t)=1$ if $t\in A$ and $0$ otherwise. We adopt the convention that a sum from an integer $i$ to an integer strictly inferior than $i$ is zero.

\subsection{Functional spaces}

We denote by $C([a,b],\mathbb{R}^{d})$ the set of functions $x :[a,b]\rightarrow \R^d$, $d\in \N^{*}$ and by $C^{i}([a,b],\ \R^{d})$ the set of i-th differentiable functions.\\

We then consider $C(\TT,\R)$ as the set of functions with value in $\R$ over $\TT$. We denote by $P_n([a,b],\R)$ the set of continuous functions which are piecewise polynomial of degree $n$ which are defined over subintervals $I^n_{i}=[t_{ni},\ t_{n(i+1)}]$ such that $\bigcup_{i} I^n_{i}=[a,b]$ and where we supposed that $N$ is a multiple of $n$. We denote also by $P_n^{+}([a,b],\R)$ ( resp. $P_n^{-}([a,b],\R)$ ) the set of polynomial functions of degree $n$ over $[t_{ni}, t_{n(i+1)}[$ (resp. \ $]t_{ni},\ t_{n(i+1)}])$ and right (resp. left) continuous. Let $T=T_0, \dots ,T_n \in \R^n$ and $F = F_0 ,\dots ,F_n \in \R^n$, we denote by $l_i (t)=\Pi_{j\not=j} \di\frac{(t-T_j)}{(T_i -T_j)}$, $0\leq i \leq n$. We denote by $P_{L,n}^F (t)=\di\sum_{i=0}^n F_i l_i (t)$ the Lagrange polynomial of degree $n$ associated to $F$ (see \cite{dema},p.21-22).

\section{Continuous versus discrete functions}

\subsection{Discrete functions and discretisation}

Let us begin with a general definition :

\begin{definition}[Discrete function] 
A discrete function is an element $F\in C(\TT ,\R )$.
\end{definition}

By definition, a discrete function is completely characterized by the finite set $F_k =F(t_k)$, $k=0,\dots ,N$.\\

In the following, we illustrate all the discrete notions with a single example given by the following discrete function : $\mathbf{F}=\{ 2,1,3,2,7,5,2 \} \in C(\TT ,\R )$ with $\TT =\{ 0,1,2,3,4,5,6 \}$. 

\begin{figure}[ht!]
\centering
\includegraphics[width=0.4\textwidth,clip]{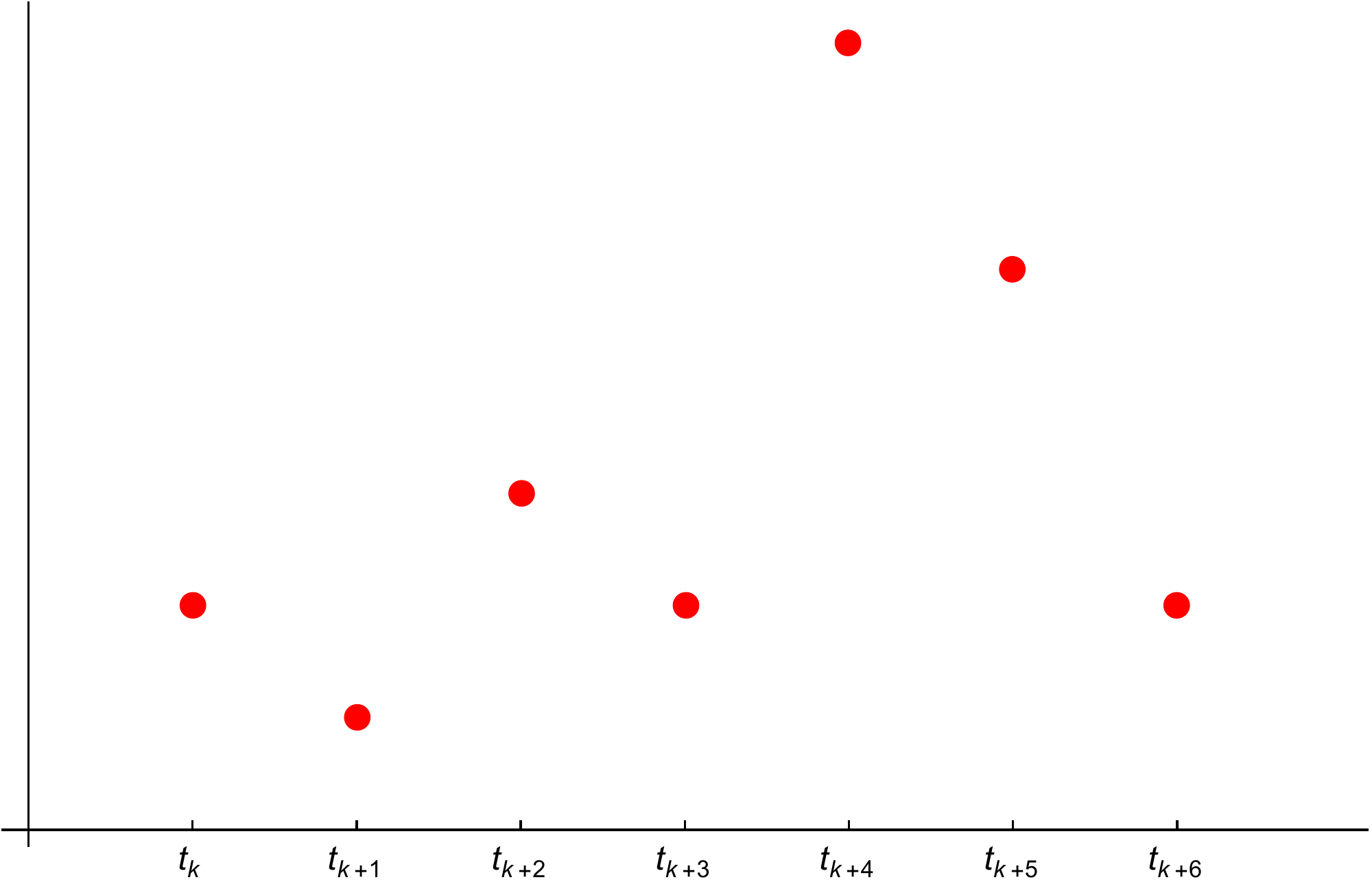}      
\caption{A discrete function}
\label{point}
\end{figure}

\begin{definition}[Discretisation of functions] Let $f\in C([a,b],\R )$, a discretisation of $f$ is a discrete function $F$ such that each $F_k$ can be computed using the value $\{ f_j =f(t_j) \}_{j=0,\dots ,N}$.
\end{definition}

A {\it natural} or {\it canonical discretisation} is of course given by the restriction of $f$ to $\T$.

\begin{definition}[Canonical discretisation]
We denote by $\pi : C([a,b] ,\R ) \rightarrow C(\TT ,\R )$ the mapping define by the restriction of a given function $f\in C([a,b] ,\R )$ to $\TT$, i.e. $\pi (f)=F$ where $F_k =f_k$, $k=,0,\dots ,N$.
\end{definition}

The following section introduces two {\it lift map} from $C(\TT ,\R)$ in various finite vectorial subspaces of $C ([a,b],\R)$. 

\subsection{Interpolation mappings and lifting of discrete functions}

In order to construct discrete analogue of continuous mappings, we have to relate the set of discrete functions to some finite vectorial spaces of $C ([a,b] ,\R )$. \\

\subsubsection{Interpolation mappings}

The notion of {\it interpolation} can be formally defined as follows :

\begin{definition}[Interpolation map]
A map $\iota :C(\TT ,\R) \rightarrow C([a,b] ,\R )$ satisfying $\pi \circ \iota =\Id$, where $\Id$ is the identity of $C(\TT ,\R )$ is called an interpolation map.
\end{definition} 

The name {\it interpolation} comes from the last condition. One can naturally extend a given interpolation mapping for discrete functions with values in $\R^d$ by posing for $F \in C(\TT ,\R^d )$, $F= (F_1 ,\dots ,F_d )$, $F_i \in C(\TT ,\R )$, $\iota (F )=(\iota (F_1 ) ,\dots ,\iota (F_d ) )$. \\

Three classical interpolation mappings will be used in the following (see \cite{dema},Chap.II,$\S$.1). 
 
\begin{definition}[$P_0^{\pm}$--interpolation] We denote by $\iota_0^{\pm} : C(\TT ,\R) \rightarrow P_0^{\pm} ([a,b] ,\R)$ the map defined for all $F\in C(\TT ,\R)$ by
\begin{equation}
\iota_0^+ (F)=\di\sum_{i=0}^{N-1} F_k  1_{[t_k ,t_{k+1}[} 
\ \ \ 
\left ( 
\mbox{\rm resp.} \ \ 
\iota_0^- (F)=\di\sum_{i=1}^{N} F_k  1_{]t_{k-1} ,t_k]} 
\right ) 
.
\end{equation}
\end{definition}

\begin{figure}[ht!]
\centering
\includegraphics[width=0.4\textwidth,clip]{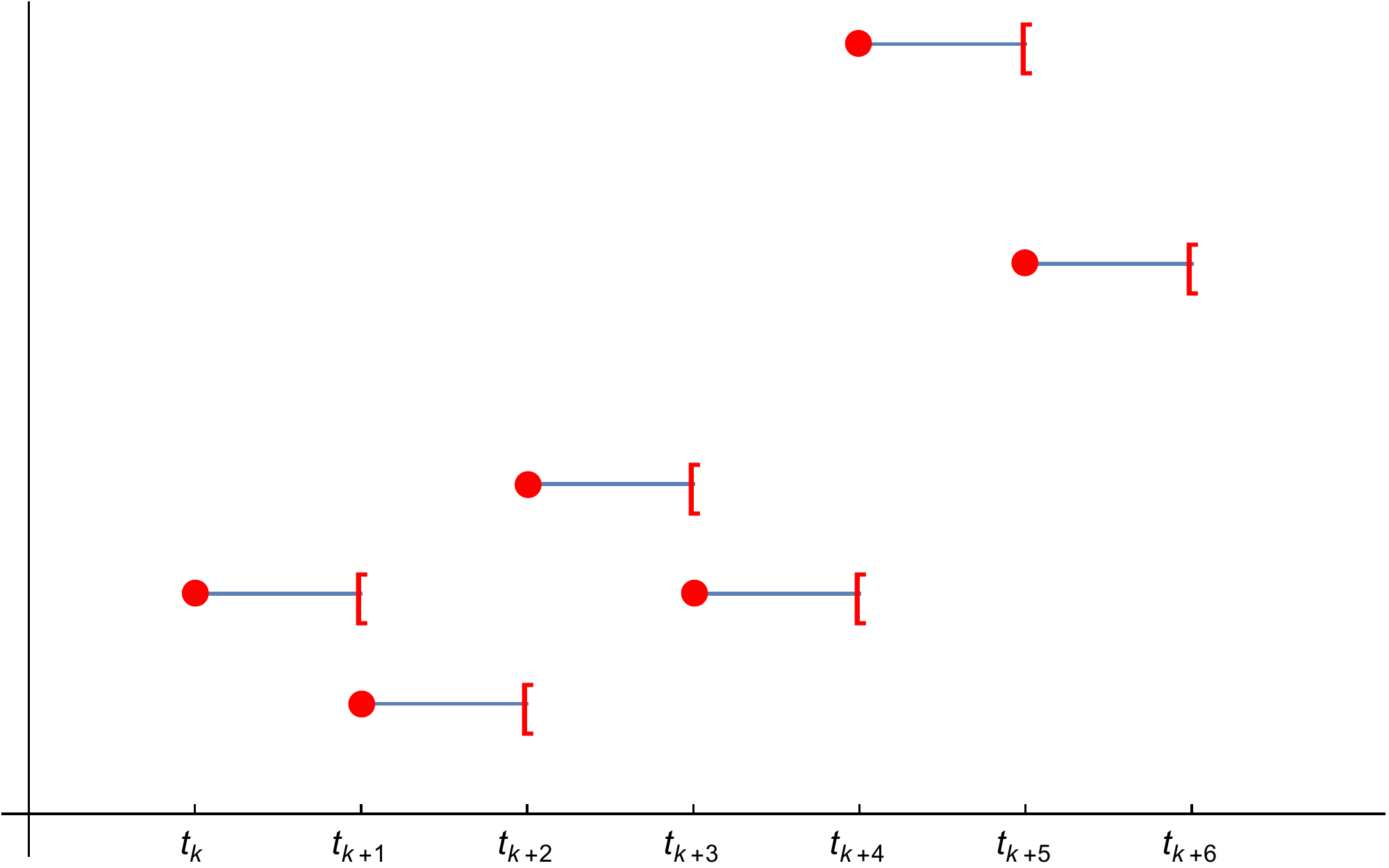}      
\caption{$P_0^+$-interpolation mapping}
\label{p1}
\end{figure}

We need sometimes more regularity.

\begin{definition}[$P^1$--interpolation] We denote by $\iota_1 : C(\TT ,\R ) \rightarrow P^1 ([a,b],\R )$ the map defined for all $F\in C(\TT ,\R )$ by 
\begin{equation}
\iota_1(F)(t)=\sum_{k=0}^{N-1} \left[F_k + \frac{F_{k+1}-F_k}{h}(t-t_k)\right] 1_{[t_k,t_{k+1}]}(t).
\end{equation}
\end{definition}

\begin{figure}[ht!]
\centering
\includegraphics[width=0.4\textwidth,clip]{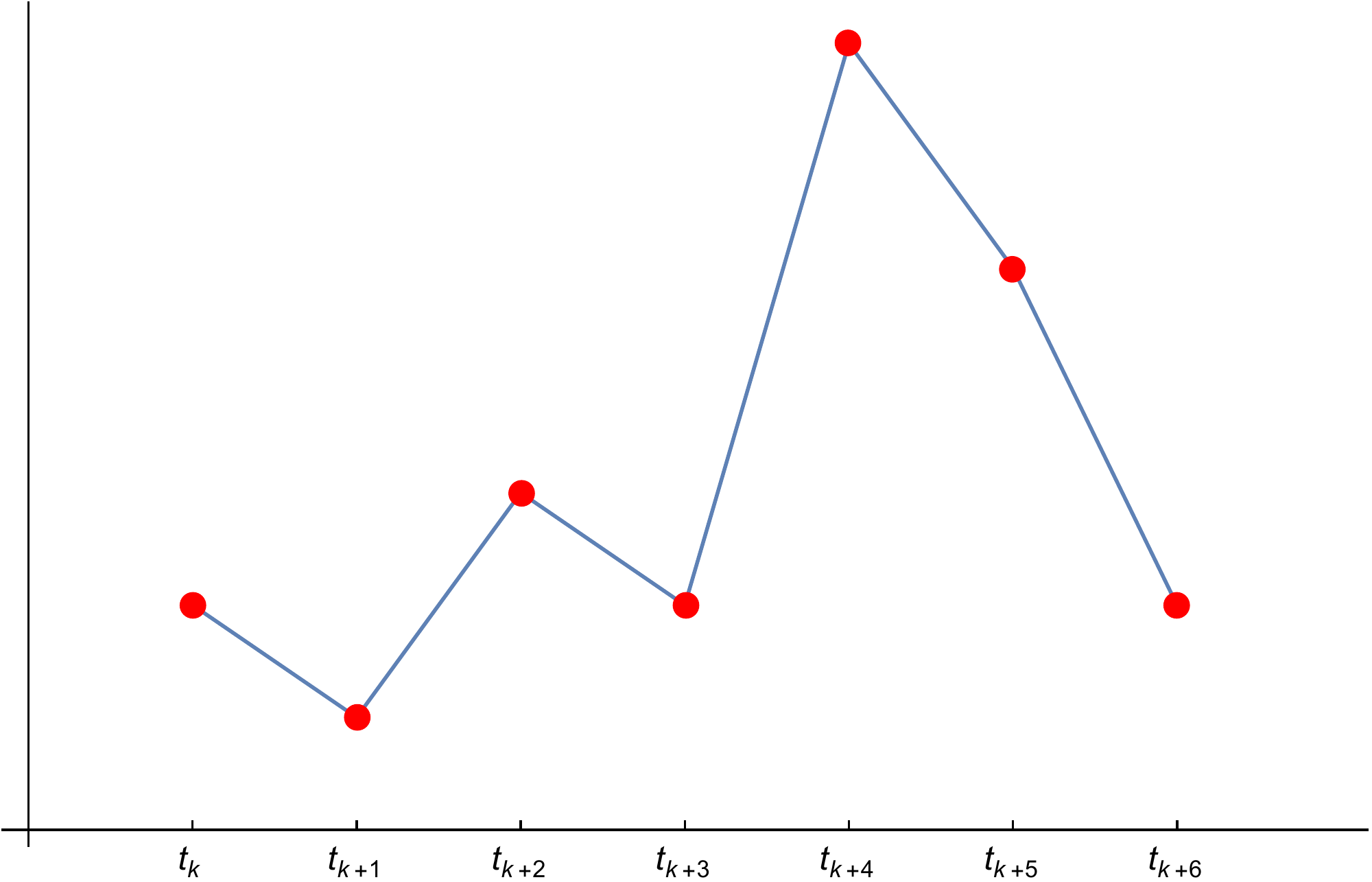}      
\caption{Order one interpolation mapping}
\label{p1}
\end{figure}

This map is of course natural and is classical in numerical analysis. The main problem is that in this case, the derivative is not defined but only left and right derivatives. This point will induce several complications in the connexion between the various discrete operators. 

\subsubsection{Properties of interpolation mappings}

The interpolation mappings satisfy some additional interesting properties :

\begin{lemma}
The restriction of $\iota_0^+ \circ \pi$ and $\iota_0^- \circ \pi$ to $P^0_+ ([a,b])$ and $P^0_- ([a,b])$ respectively is the identity. 
\end{lemma}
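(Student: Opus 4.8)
The plan is simply to unwind the definitions on an arbitrary element of $P_0^+([a,b],\R)$ (resp. $P_0^-([a,b],\R)$) and to check that the composition $\iota_0^+\circ\pi$ (resp. $\iota_0^-\circ\pi$) returns it unchanged. First I would recall that, by the very definition of $P_0^+([a,b],\R)$, a function $f$ in this space is constant on each subinterval $[t_k,t_{k+1}[$, $k=0,\dots,N-1$, and right continuous; denoting by $c_k$ its constant value on that interval, one has $f=\sum_{k=0}^{N-1} c_k 1_{[t_k,t_{k+1}[}$, and in particular $f(t_k)=c_k$ for $k=0,\dots,N-1$ since $t_k\in[t_k,t_{k+1}[$.

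Next I would apply $\pi$: by the definition of the canonical discretisation, $\pi(f)=F$ with $F_k=f(t_k)$, hence $F_k=c_k$ for $k=0,\dots,N-1$. Feeding this discrete function back into $\iota_0^+$ gives $\iota_0^+(\pi(f))=\sum_{k=0}^{N-1} F_k 1_{[t_k,t_{k+1}[}=\sum_{k=0}^{N-1} c_k 1_{[t_k,t_{k+1}[}=f$, which is exactly the asserted identity. The argument for $\iota_0^-\circ\pi$ on $P_0^-([a,b],\R)$ is the mirror image: if $f\in P_0^-([a,b],\R)$ has constant value $d_k$ on $]t_{k-1},t_k]$ then $f(t_k)=d_k$, so $\pi(f)=F$ with $F_k=d_k$, and $\iota_0^-(\pi(f))=\sum_{k=1}^{N} d_k 1_{]t_{k-1},t_k]}=f$. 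One just replaces the half-open intervals $[t_k,t_{k+1}[$ by $]t_{k-1},t_k]$ and right continuity by left continuity throughout.

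The only point deserving a word of care — and the closest thing to an obstacle — is the behaviour at the endpoints $a=t_0$ and $b=t_N$, since the intervals $[t_k,t_{k+1}[$ (resp. $]t_{k-1},t_k]$) used to build $\iota_0^\pm$ cover $[a,b[$ (resp. $]a,b]$) but miss one boundary point. This is harmless here, because the elements of $P_0^+([a,b],\R)$ are by definition the functions built on the half-open cells $[t_{ni},t_{n(i+1)}[$, so they carry no extra data at $t=b$, and symmetrically for $P_0^-([a,b],\R)$ at $t=a$; hence $\iota_0^\pm\circ\pi=\Id$ on the whole of $P_0^\pm([a,b],\R)$ as stated. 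Equivalently, one may note that $\iota_0^\pm\circ\pi$ is idempotent with image contained in $P_0^\pm([a,b],\R)$ (this is part of the definition of $\iota_0^\pm$), so the computation above, which identifies its image with all of $P_0^\pm([a,b],\R)$, shows it restricts to the identity there.
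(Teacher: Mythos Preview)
Your proof is correct and is exactly the natural verification one would expect; the paper in fact states this lemma without proof, so your argument simply fills in what the authors leave as an immediate consequence of the definitions. Your remark about the missing endpoint is the only non-trivial observation, and you resolve it in the right way by noting that $\iota_0^+$ ignores $F_N$ and that elements of $P_0^+([a,b],\R)$ carry no independent data at $t_N$.
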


\subsection{Algebraic structures}

\subsubsection{Product}

We define a product on $C(\TT,\R )$ which is compatible with the classical product of functions. Let $\star$ be the product defined by 

\[\xymatrixcolsep{5pc} \xymatrixrowsep{5pc}
\xymatrix{ \underset{(\iota_1(F),\iota_1(G))}{P_1([a,b],\R ) \times P_1([a,b],\R )}  \ar[r]^-{ \cdot } & \underset{\iota_1(F) \cdot \iota_1(G)}{P_1([a,b],\R )} \ar[d]^-\pi \\ \underset{(F,G)}{C(\TT,\R ) \times C(\TT,\R )} \ar[u]^-{\iota_1} \ar[r]^-{\star} & \underset{F\star G}{C(\TT,\R )}}
\]

\begin{definition}
The product $\star$ on $C(\TT,\R )$ if defined for all $(F,G)\in C(\TT,\R ) \times C(\TT,\R )$ by $(F\star G)_k =  F_k . G_k $ for $k=0,...,N$.
\end{definition}

Using this definition, we can easily define the product of two vectorial discrete functions :

\begin{definition}
The product $\langle\cdot ,\cdot \rangle_{\star}$ is defined for all $(F,G)\in C(\TT ,\R^d )$ by 
$\langle F , G\rangle_{\star} =\di\sum_{i=1}^d F_i \star G_i$ where $F= (F_1 ,\dots ,F_d )$ and $G= (G_1 ,\dots ,G_d )$.
\end{definition}

We will see that this discrete product can be used to extend the Hilbert structure to discrete functions.

\subsubsection{Duality mappings}

Let $\TT \subset [a,b]$ be given, $\TT =\{ t_0 ,\dots ,t_N \}$. We denote by $\TT_+$ and $\TT_-$ the subset of $\TT$ given by $\TT_+=\{t_0,...,t_{N-1}\}$ (resp. $\TT_-=\{t_1,...,t_{N}\}$). We define $C(\TT_+ ,\R)$ and $C(\TT_- ,\R )$. Many properties of discrete objects come from the date of some {\it natural} mappings between these two functional spaces.

\begin{definition}[Duality mappings] 
We denote by $\sigma :C(\TT_+ ,\R^d ) \rightarrow C(\TT_- ,\R^d )$ (resp. $\rho : C(\TT_- ,\R^d ) \rightarrow C(\TT_+ ,\R^d )$) the map defined by  
\begin{equation}
\sigma (F) (t_i ) =F(t_{i-1} ),\ i=1,\dots ,N\ \ \ 
\left ( 
\mbox{\rm resp.} \ 
\rho (F) (t_i ) =F(t_{i+1} ),\ i=0,\dots ,N-1 
\right ) 
. 
\end{equation}
\end{definition}

\begin{lemma}
The maps $\sigma$ and $\rho$ are bijective and satisfy $\rho \circ \sigma =\sigma \circ \rho =\Id$.
\end{lemma}

The proof follows from the definition of the action of $\sigma$ and $\rho$ on 

\section{Discrete derivatives}

\subsection{Formal construction}

This function is not derivable but we can always define a left and right derivative in the points $t_k$. As a consequence, we can define $\Delta$ and $\nabla$ the two discrete operators corresponding to the left and right derivative denoted by $\frac{d^{+}}{dt}$ and $\frac{d^{-}}{dt}$ by a commutative diagram :

\begin{definition}
The forward (resp. backward) discrete derivative $\Delta$ (resp. $\nabla$) is defined by 
\begin{equation}
\Delta = \pi\circ d^{+} \circ \iota_1 \quad \left( \text{resp.} \quad 
\nabla  = \pi\circ d^{-} \circ \iota_1 \right).
\end{equation}
\end{definition}

This definition corresponds to the following commutative diagram

\[\xymatrixcolsep{5pc} \xymatrixrowsep{5pc}
\xymatrix{ \underset{\iota_1(F)}{P_1([a,b],\R)}\ar[r]^{\di\frac{d^{+}}{dt}} & \underset{\frac{d^{+}}{dt}[ \iota_1(F) ]}{P_0^{+}([a,b],\R)} \ar[d]^\pi \\ \underset{F}{C(\TT,\R)} \ar[u]^{\iota_1} \ar[r]^\Delta & \underset{\Delta F}{C(\TT_+,\R)} }
\]

We deduce easily from the definition that :

\begin{lemma}
The forward (resp. backward) discrete derivative satisfies $\Delta : C(\TT ,\R) \rightarrow C(\TT_+ ,\R)$ (resp. $\nabla : C(\TT ,\R ) \rightarrow C(\TT_- ,\R)$) where $\TT_+=\{t_0,...,t_{N-1}\}$ (resp. $\TT_-=\{t_1,...,t_{N}\}$).
\end{lemma}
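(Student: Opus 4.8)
The plan is to unwind the definition $\Delta = \pi \circ d^{+} \circ \iota_1$ by a direct computation, reading off both the set of nodes on which $\Delta F$ is defined and the explicit formula for $\Delta F$ at the same time.

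First I would fix $F \in C(\TT ,\R)$ and write out $\iota_1(F)$ from its definition: on each subinterval $[t_k ,t_{k+1}]$, $0 \le k \le N-1$, the function $\iota_1(F)$ coincides with the affine map $t \mapsto F_k + \frac{F_{k+1}-F_k}{h}(t-t_k)$. Differentiating from the right at any point of $[t_k ,t_{k+1}[$ gives the constant slope $\frac{F_{k+1}-F_k}{h}$; in particular at the node $t_k$ the right derivative $d^{+}[\iota_1(F)](t_k)$ is computed using the affine piece carried by $[t_k ,t_{k+1}]$ and hence equals that same constant. Therefore $d^{+}\circ\iota_1(F) = \sum_{k=0}^{N-1}\frac{F_{k+1}-F_k}{h}\,1_{[t_k ,t_{k+1}[} \in P_0^{+}([a,b],\R)$, exactly as displayed in the commutative diagram following the definition of $\Delta$.

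Next I would observe that this element of $P_0^{+}([a,b],\R)$ is only constrained on $\bigcup_{k=0}^{N-1}[t_k ,t_{k+1}[ = [a,b[$; equivalently, the right derivative of $\iota_1(F)$ at $t_N=b$ is not defined, since no subinterval of the partition lies to the right of $b$. Consequently, applying the canonical projection $\pi$ (restriction to the nodes where the function is defined) produces a discrete function whose natural set of nodes is $\{t_0,\dots ,t_{N-1}\}=\TT_+$. This gives $\Delta F \in C(\TT_+ ,\R)$ with $(\Delta F)_k = \frac{F_{k+1}-F_k}{h}$ for $k=0,\dots ,N-1$, which is the assertion for $\Delta$.

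The backward case is entirely symmetric: on $[t_{k-1},t_k]$ the affine piece of $\iota_1(F)$ has slope $\frac{F_k-F_{k-1}}{h}$, so $d^{-}\circ\iota_1(F) = \sum_{k=1}^{N}\frac{F_k-F_{k-1}}{h}\,1_{]t_{k-1},t_k]} \in P_0^{-}([a,b],\R)$, which is constrained only on $]a,b]$; the left derivative at $t_0=a$ is undefined. Restricting to nodes yields $\nabla F \in C(\TT_- ,\R)$ with $(\nabla F)_k = \frac{F_k-F_{k-1}}{h}$ for $k=1,\dots ,N$. There is no genuine obstacle here; the only point deserving care is the endpoint book-keeping — the right (resp. left) derivative of the interpolant fails to exist at $b$ (resp. $a$), and this is precisely what forces the codomain to be $C(\TT_+ ,\R)$ (resp. $C(\TT_- ,\R)$) rather than $C(\TT ,\R)$.
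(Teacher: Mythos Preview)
Your proof is correct and follows essentially the same route as the paper: both arguments observe that $d^{+}$ sends $P^1([a,b],\R)$ into $P_0^{+}([a,b[,\R)$ (so the right derivative is undefined at $t_N=b$), whence $\pi$ lands in $C([a,b[\cap\TT,\R)=C(\TT_+,\R)$, and symmetrically for $\nabla$. The only difference is granularity: the paper argues at the level of function spaces in two lines, while you carry out the explicit piecewise computation, which as a bonus also yields the formulae $(\Delta F)_k=(F_{k+1}-F_k)/h$ and $(\nabla F)_k=(F_k-F_{k-1})/h$ that the paper records separately in the next lemma.
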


\begin{proof}
We have $\frac{d^{+}}{dt} (P^1 ([a,b])=P^0_+ ([a,b[)$ and $\frac{d^{-}}{dt} (P^1 ([a,b])=P^0_- (]a,b])$. We deduce that $\pi (P^0_+ ([a,b[))=C([a,b[\cap \TT ,\R)$ and 
$\pi (P^0_- (]a,b]))=C(]a,b]\cap \TT ,\R)$.
\end{proof}

\begin{figure}[ht!]
\centering
\includegraphics[width=0.4\textwidth,clip]{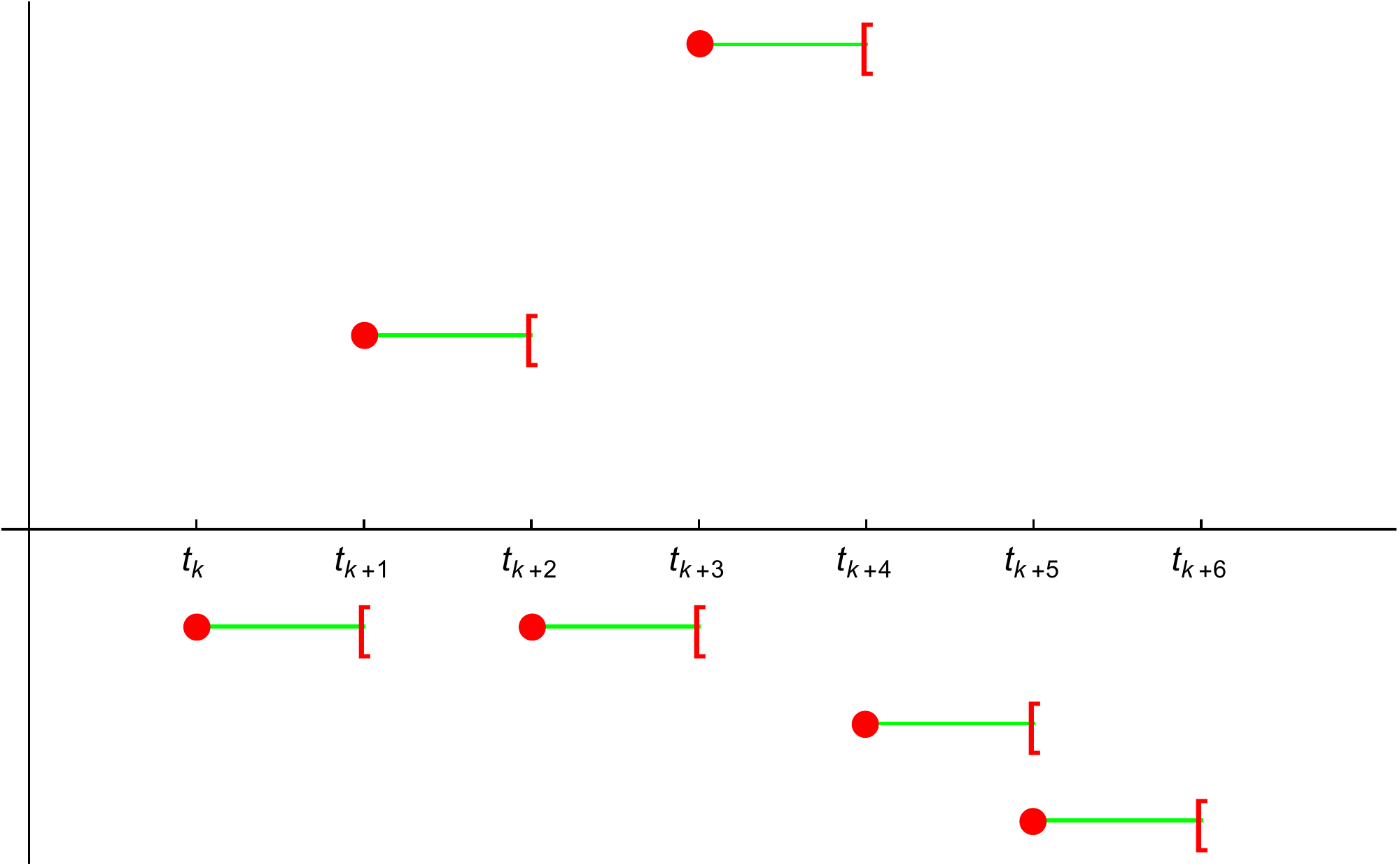}      
\caption{Forward discrete derivative}
\label{dp1}
\end{figure}
\subsection{Explicit form}

It is easy to obtain an explicit form for these two discrete derivatives :

\begin{lemma}
Let $F\in C(\TT ,\R)$, we have $(\Delta F)_k = \frac{F_{k+1}-F_k}{h}$, for $k=0,...,N-1$, and
 $(\nabla F)_k = \frac{F_k-F_{k-1}}{h}$, for $k=1,...,N$. 
\end{lemma}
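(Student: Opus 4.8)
The plan is to unwind the composition $\Delta = \pi \circ d^{+} \circ \iota_1$ directly on the data $F_0,\dots,F_N$ and read off the slope of the interpolant, and similarly for $\nabla = \pi \circ d^{-} \circ \iota_1$. First I would apply the definition of $\iota_1$: on each subinterval $[t_k,t_{k+1}]$ the function $\iota_1(F)$ is the affine function $t \mapsto F_k + \frac{F_{k+1}-F_k}{h}(t-t_k)$. Its derivative on the open interval $]t_k,t_{k+1}[$ is the constant $\frac{F_{k+1}-F_k}{h}$, so the right derivative $d^{+}$ at the node $t_k$ (which by definition of $d^{+}$ looks at the piece to the right of $t_k$, i.e. the piece on $[t_k,t_{k+1}]$) equals $\frac{F_{k+1}-F_k}{h}$ for $k=0,\dots,N-1$; this is exactly the element of $P_0^{+}([a,b],\R)$ appearing in the commutative diagram. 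Applying $\pi$ (restriction to $\TT$) then just evaluates this piecewise-constant function at the nodes $t_0,\dots,t_{N-1}$, yielding $(\Delta F)_k = \frac{F_{k+1}-F_k}{h}$ for $k=0,\dots,N-1$, which by the earlier lemma is the correct domain $C(\TT_+,\R)$.

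For $\nabla$ the argument is symmetric: the left derivative $d^{-}$ at the node $t_k$ looks at the piece immediately to the left of $t_k$, namely the affine piece on $[t_{k-1},t_k]$, whose constant slope is $\frac{F_k - F_{k-1}}{h}$; hence $(\nabla F)_k = \frac{F_k - F_{k-1}}{h}$ for $k=1,\dots,N$, with values in $C(\TT_-,\R)$ as expected.

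There is really no serious obstacle here — the statement is a routine computation once the definitions of $\iota_1$, $d^{\pm}$ and $\pi$ are in place. The only point requiring a little care is the bookkeeping at the endpoints of $\TT$ and the convention for which affine piece the one-sided derivatives $d^{+}$ and $d^{-}$ see at a node: one must check that $d^{+}$ is well defined at $t_0,\dots,t_{N-1}$ (using the piece on the right) and $d^{-}$ at $t_1,\dots,t_N$ (using the piece on the left), so that the resulting discrete functions live on $\TT_+$ and $\TT_-$ respectively, consistently with the preceding lemma. Beyond that, the proof is just substitution into the commutative diagram.
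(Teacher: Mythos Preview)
Your proposal is correct and is exactly the direct computation the paper has in mind; in fact the paper does not spell out a proof at all, merely remarking that ``it is easy to obtain an explicit form'' from the definition $\Delta=\pi\circ d^{+}\circ\iota_1$ (resp.\ $\nabla=\pi\circ d^{-}\circ\iota_1$), which is precisely the unwinding you carry out. Your attention to the endpoint bookkeeping and the domains $\TT_+$, $\TT_-$ is appropriate and matches the preceding lemma in the paper.
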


We recover the classical forward and backward derivatives used in numerical analysis. 

\subsection{Properties of discrete derivatives}

\subsubsection{Duality}

The duality between $C(\TT_+ ,\R )$ and $C(\TT_- ,\R )$ can be used to obtain a duality between the $\Delta$ and $\nabla$ derivative. Precisely, we have :

\begin{lemma}
\label{dual-derive}
Let $F\in C(\TT ,\R )$, then $\nabla (\sigma (F)) =\Delta (F)$ and $\nabla (F) =\Delta (\rho (F))$ over $\TT^{\pm}$.
\end{lemma}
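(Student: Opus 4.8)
The plan is to verify the identities pointwise on the relevant index sets, using the explicit formulas for $\Delta$, $\nabla$, $\sigma$ and $\rho$ that were established in the preceding lemmas. Recall that $\sigma$ shifts indices down, $\sigma(F)(t_i) = F(t_{i-1})$, while $\rho$ shifts them up, $\rho(F)(t_i) = F(t_{i+1})$, and that $(\Delta F)_k = (F_{k+1}-F_k)/h$ on $\{0,\dots,N-1\}$ whereas $(\nabla F)_k = (F_k - F_{k-1})/h$ on $\{1,\dots,N\}$. The key observation is simply that precomposing with a shift transforms a backward difference into a forward difference and vice versa, because the shift realigns the two sample points that the difference quotient compares.

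For the first identity $\nabla(\sigma(F)) = \Delta(F)$, I would fix $i$ in the common domain and compute $(\nabla(\sigma(F)))(t_i) = \bigl(\sigma(F)(t_i) - \sigma(F)(t_{i-1})\bigr)/h = \bigl(F(t_{i-1}) - F(t_{i-2})\bigr)/h$; one then has to notice that this equals $(\Delta F)_{i-1}$, so the equality of the two discrete functions only holds after the appropriate index identification — which is exactly why the statement says ``over $\TT^{\pm}$'', i.e. one compares $\nabla(\sigma(F)) \in C(\TT_-,\R)$ with $\Delta(F)\in C(\TT_+,\R)$ through the canonical relabelling of $\TT_-$ and $\TT_+$ induced by $\sigma$ and $\rho$. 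For the second identity $\nabla(F) = \Delta(\rho(F))$, the same computation runs in reverse: $(\Delta(\rho(F)))(t_i) = \bigl(\rho(F)(t_{i+1}) - \rho(F)(t_i)\bigr)/h = \bigl(F(t_{i+2}) - F(t_{i+1})\bigr)/h$, which matches $(\nabla F)_{i+2}$ after the shift, and one checks the index ranges agree. Alternatively, the second identity can be derived formally from the first by applying $\rho$ and using $\rho\circ\sigma = \Id$ from the earlier lemma on $\sigma$ and $\rho$.

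The main obstacle here is not computational — the difference quotients are one-line manipulations — but rather bookkeeping: one must be careful about the precise index sets on which each side is defined ($\TT_+$ versus $\TT_-$) and state unambiguously what ``equality over $\TT^{\pm}$'' means, since $\nabla(\sigma(F))$ and $\Delta(F)$ a priori live on different (though canonically identified) discrete sets. I would therefore spend a sentence making the identification explicit via $\sigma$ and $\rho$ before writing the chain of equalities, so that the two ``over $\TT^{\pm}$'' clauses are rigorous rather than merely suggestive.
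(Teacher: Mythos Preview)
The paper states this lemma without proof, so there is nothing to compare your argument against directly. Your approach --- a pointwise check from the explicit formulas for $\Delta$, $\nabla$, $\sigma$, $\rho$ --- is the natural one, but the execution contains an index slip and, more seriously, your own computations show that the identity does \emph{not} hold pointwise under the paper's literal definitions.

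Concretely: you compute $\bigl(\nabla(\sigma F)\bigr)(t_i)=(F_{i-1}-F_{i-2})/h$ and assert this equals $(\Delta F)_{i-1}$. But $(\Delta F)_{i-1}=(F_i-F_{i-1})/h$; the right match is $(\Delta F)_{i-2}$. Your second computation correctly gives $(\Delta(\rho F))_i=(\nabla F)_{i+2}$. In both cases the offset is $2$, not $1$, so the ``canonical identification via $\sigma,\rho$'' (which shifts indices by $1$) cannot repair the discrepancy, and your proposed reading of ``over $\TT^{\pm}$'' does not actually yield the stated equality.

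The underlying issue is an inconsistency in the paper itself. The discrete Leibniz rule, the present lemma, and the use of $\Delta[\rho(F)]=\nabla F$ in the integration-by-parts proof are all \emph{pointwise} correct on $\TT^{\pm}$ provided one takes $\sigma(F)_k=F_{k+1}$ and $\rho(F)_k=F_{k-1}$ (the standard time-scale convention), which is the opposite of the displayed formulas in the definition of the duality mappings. Under that intended convention no reindexing is needed at all: for $k\in\{1,\dots,N-1\}$ one has
\[
(\nabla(\sigma F))_k=\frac{(\sigma F)_k-(\sigma F)_{k-1}}{h}=\frac{F_{k+1}-F_k}{h}=(\Delta F)_k,
\qquad
(\Delta(\rho F))_k=\frac{F_k-F_{k-1}}{h}=(\nabla F)_k.
\]
A clean write-up should therefore flag the sign discrepancy in the definition, adopt the convention that makes the Leibniz formula (which you can verify independently) come out right, and then record this one-line pointwise check.
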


\subsubsection{Kernel of discrete derivatives}

\begin{lemma}
Let $F\in C(\TT,\R)$. We have $\Delta F=0$ (resp. $\nabla F=0$) if and only if $F=\mathbb{F}_0$, where $\mathbb{F}_0$ is defined by $\pi (F_0 Id)$ where $Id$ is the identity function. We will also say that $F$ is constant.
\end{lemma}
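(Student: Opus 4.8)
The plan is to work directly from the explicit forms of $\Delta$ and $\nabla$ given in the previous lemma, reducing the statement to an elementary observation about finite sequences. First I would unwind what "constant" means here: $\mathbb{F}_0 = \pi(F_0 \, \mathrm{Id})$... wait, this is slightly awkwardly phrased in the excerpt, but the intent is clear — $\mathbb{F}_0$ is the discrete function whose value at every node $t_k$ equals $F_0 = F(t_0)$. So the claim is: $\Delta F = 0$ iff $F_k = F_0$ for all $k$, and similarly for $\nabla$.

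For the forward derivative, I would argue as follows. By the explicit form lemma, $(\Delta F)_k = (F_{k+1} - F_k)/h$ for $k = 0, \dots, N-1$. Hence $\Delta F = 0$ as an element of $C(\TT_+,\R)$ if and only if $F_{k+1} - F_k = 0$ for every $k \in \{0,\dots,N-1\}$, i.e. $F_{k+1} = F_k$ for all such $k$. A trivial induction on $k$ then gives $F_k = F_0$ for all $k = 0,\dots,N$, which is precisely $F = \mathbb{F}_0$. Conversely, if $F = \mathbb{F}_0$ then $F_{k+1} - F_k = F_0 - F_0 = 0$, so $\Delta F = 0$. The backward case is entirely symmetric: $(\nabla F)_k = (F_k - F_{k-1})/h$ for $k = 1,\dots,N$ vanishes identically iff $F_k = F_{k-1}$ for all $k \in \{1,\dots,N\}$, and the same induction yields $F = \mathbb{F}_0$.

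There is essentially no obstacle here; the only point requiring the slightest care is that $\Delta F$ and $\nabla F$ live on the restricted node sets $\TT_+$ and $\TT_-$, so "$\Delta F = 0$" is a statement about $N$ equations indexed $k=0,\dots,N-1$ (and $\nabla F = 0$ about $N$ equations indexed $k=1,\dots,N$), not $N+1$; but in both cases the chain of equalities still propagates across all of $\{0,\dots,N\}$, since the set of consecutive pairs $(t_k, t_{k+1})$ covered is exactly the same. One could alternatively phrase the whole proof at the level of the defining commutative diagram — $\Delta = \pi \circ d^+ \circ \iota_1$ — by noting that $d^+(\iota_1(F)) = 0$ forces $\iota_1(F)$ to be constant on each subinterval and continuity at the nodes forces a single global constant, whose restriction is $\mathbb{F}_0$; this is conceptually cleaner but logically identical, so I would keep the short computational proof.

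\begin{proof}
By the explicit form of the discrete derivatives, $(\Delta F)_k = (F_{k+1}-F_k)/h$ for $k=0,\dots,N-1$. Thus $\Delta F = 0$ in $C(\TT_+,\R)$ if and only if $F_{k+1}=F_k$ for all $k=0,\dots,N-1$. An immediate induction on $k$ shows this is equivalent to $F_k = F_0$ for all $k=0,\dots,N$, that is, $F = \mathbb{F}_0$. The statement for $\nabla$ is proved in the same way using $(\nabla F)_k = (F_k - F_{k-1})/h$ for $k=1,\dots,N$.
\end{proof}
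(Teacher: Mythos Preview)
Your proof is correct. The paper states this lemma without proof, treating it as an immediate consequence of the explicit form of $\Delta$ and $\nabla$; your argument via the chain of equalities $F_{k+1}=F_k$ and a one-line induction is exactly the elementary verification one would expect here, and your remark about the domains $\TT_+$, $\TT_-$ is the only point worth noting.
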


The previous Lemma can be used for example to characterize {\it discrete} first integrals of classical numerical scheme (see for example (\cite{bcg},Theorem 12 p.885) and compare with (\cite{lubi}, Theorem 6.7 p.197)).

\subsubsection{Discrete Leibniz formula}

An important algebraic property of the classical derivative is the Leibniz formula. The following theorem gives the discrete version of this formula which mimics exactly the continuous one.

\begin{theorem}[Discrete Leibniz rule]
Let $F,G \in C(\TT,\R)$, we have 
\begin{equation}
\left .
\begin{array}{lll}
\Delta(F\star G) & = & (\Delta F)\star G + \sigma(F) \star (\Delta G)\ \ \mbox{\rm over}\ \TT_+,\\
\nabla(F\star G) & = & (\nabla F)\star G + \rho (F) \star (\Delta G) \ \ \mbox{\rm over}\ \TT_-.
\end{array}
\right .
\end{equation}
\end{theorem}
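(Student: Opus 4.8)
The plan is to verify both identities directly on the finitely many points $t_k$ that determine a discrete function, using the explicit formulas for $\Delta$, $\nabla$, $\star$, $\sigma$ and $\rho$ obtained in the earlier lemmas. Since a discrete function is completely characterized by its values $F_k = F(t_k)$, it suffices to evaluate the left- and right-hand sides at each point of $\TT_+$ (for the first identity) and $\TT_-$ (for the second).

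First I would treat the forward case. Fix $k \in \{0,\dots,N-1\}$. By the explicit form of $\Delta$ and the definition of $\star$, the left-hand side at $t_k$ is $(\Delta(F\star G))_k = \dfrac{F_{k+1}G_{k+1} - F_k G_k}{h}$. On the right-hand side, $(\Delta F)_k \star G$ evaluated at $t_k$ is $\dfrac{F_{k+1}-F_k}{h}\,G_k$, while $\sigma(F)\star(\Delta G)$ at $t_k$ uses $\sigma(F)(t_k) = F(t_{k-1})$; wait — one must be careful about the indexing convention here, since $\sigma$ is defined on $C(\TT_+,\R)$ with $\sigma(F)(t_i) = F(t_{i-1})$ for $i=1,\dots,N$. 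In the Leibniz formula the intended reading is that $\sigma$ is applied so that the ``shifted'' factor is $F_{k+1}$, matching the classical telescoping identity $F_{k+1}G_{k+1} - F_kG_k = (F_{k+1}-F_k)G_k + F_{k+1}(G_{k+1}-G_k)$. So the key algebraic step is this one-line add-and-subtract identity, and the only real work is checking that the shift operators $\sigma$ (resp. $\rho$) are exactly the ones producing the factor $F_{k+1}$ (resp. $F_{k-1}$) in the second summand. Dividing the telescoping identity by $h$ then gives the result termwise.

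The backward case is entirely analogous: for $k \in \{1,\dots,N\}$ one writes $F_kG_k - F_{k-1}G_{k-1} = (F_k - F_{k-1})G_k + F_{k-1}(G_k - G_{k-1})$, divides by $h$, and identifies the first summand with $(\nabla F)\star G$ at $t_k$ and the second with $\rho(F)\star(\nabla G)$ at $t_k$, since $\rho(F)(t_{k-1}) = F(t_k)$ places $F_{k-1}$ as the coefficient of the difference $\nabla G$. (Note the statement as printed writes $\Delta G$ in the second line; this should read $\nabla G$, and I would flag or silently correct this in the proof.)

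The main obstacle is purely bookkeeping: matching the domains $\TT_+$, $\TT_-$, $\TT$ correctly and making sure $\sigma$ and $\rho$ are applied in the direction that reproduces the telescoping split, rather than the opposite shift. There is no analytic content — no limits, no estimates — because $\Delta$ and $\nabla$ were already shown to be the classical finite-difference quotients in Lemma above; so once the index alignment is fixed the proof reduces to the two elementary product-difference identities displayed in the previous paragraphs. One could alternatively give a ``diagram'' proof by pulling back through $\iota_1$ and using the ordinary Leibniz rule for $d^{+}$ on $P_1([a,b],\R)$, but since $\iota_1(F)\cdot\iota_1(G)$ is piecewise quadratic rather than piecewise affine, $\pi\circ d^{+}$ applied to it does not literally factor through $\star$, so the honest route is the direct termwise computation above.
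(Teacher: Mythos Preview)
The paper states this theorem without proof, so there is no argument to compare against; your direct termwise verification via the telescoping identity $F_{k+1}G_{k+1}-F_kG_k=(F_{k+1}-F_k)G_k+F_{k+1}(G_{k+1}-G_k)$ (and its backward analogue) is the natural one and is correct.

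Your hesitation about the role of $\sigma$ is well founded and worth making explicit rather than glossing over. With the paper's own definition $\sigma(F)(t_i)=F(t_{i-1})$, the term $\sigma(F)\star(\Delta G)$ would contribute $F_{k-1}(\Delta G)_k$, which does \emph{not} reproduce the telescoping split; the identity requires the forward-shifted value $F_{k+1}$, i.e.\ what the paper calls $\rho(F)$ (or, in time-scale notation, $F^{\sigma}$ with $\sigma$ the forward jump). The paper's later use of the formula in the integration-by-parts proof is consistent only because it applies Leibniz to $\rho(F)\star G$ and then invokes $\sigma\circ\rho=\mathrm{Id}$, so the mismatch is hidden. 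You are also right that the $\Delta G$ in the second line should read $\nabla G$. I would state both corrections plainly before giving the computation. Your closing remark that the ``diagram'' route through $\iota_1$ fails because $\iota_1(F)\cdot\iota_1(G)$ is piecewise quadratic is a good observation and justifies why the honest proof must be the index-level one.
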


It is interesting to notice that despite the constant use of the previous formula in many computations concerning numerical scheme, very few exhibit the fact that this is a discrete Leibniz formula. This is due in part to the fact that most of the computations are usually made directly using summations and not in an abstract way. This phenomenon is particularly visible in the derivation of {\it variational integrators} (see \cite{mars}) in the context of the {\it discrete calculus of variations}. 

\section{Discrete antiderivatives}

Using the same procedure as for discrete derivatives, we define discrete antiderivatives.

\subsection{Discrete antiderivatives}

Using the lift mappings we can easily define an antiderivative :

\begin{definition}[Discrete antiderivative]
The discrete $\Delta$-antiderivative (resp. $\nabla$-anti\-derative) denoted by $J_\Delta$ (resp. $J_\nabla$) is defined by
\begin{equation}
J_\Delta =\pi \circ \int_{a}^{t} \circ \, \iota_0^{+} \ \ 
\left ( \mbox{\rm resp.}\ 
J_\nabla =\pi \circ \int_{a}^{t} \circ \, \iota_0^{-}
\right )
.
\end{equation}
\end{definition}

This definition for $J_\Delta$ corresponds to the following diagram 

\[\xymatrixcolsep{5pc} \xymatrixrowsep{5pc}
\xymatrix{ \underset{\iota_0^{+}(F)}{P_0^{+}([a,b],\R)}\ar[r]^{ \di\int_{a}^{t} } & \underset{\di\int_{a}^{t} \iota_0^{+}(F)\, dt}{P_1^{+}([a,b],\R)} \ar[d]^\pi \\ \underset{F}{C(\TT,\R)} \ar[u]^{\iota_0^{+}} \ar[r]^{J_\Delta} & \underset{J_\Delta F}{C(\TT,\R)} }
\]

An analogous diagram is obtained for $J_\nabla$.

\subsection{Explicit form}

An explicit formula for $J_{\Delta}$ (resp. $J_{\nabla}$) is easily obtained : 

\begin{lemma}
Let $F\in C(\TT,\R)$, we have 
$$\left [ J_{\Delta} ( F )\right ]_k  = \di\sum_{i=0}^{k-1}(t_{i+1}-t_i) \, F_i \ \
\left ( 
\mbox{\rm resp}.\ \ 
\left [ J_{\nabla} (F )\right ] _k = \di\sum_{i=1}^k (t_{i}-t_{i-1}) \, F_i 
\right )
$$
for $k=0,...,N$. 
\end{lemma}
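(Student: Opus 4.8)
The plan is to unwind the definition $J_\Delta = \pi \circ \int_a^t \circ\, \iota_0^+$ and compute directly. First I would apply $\iota_0^+$ to $F \in C(\TT,\R)$, obtaining the step function $\iota_0^+(F) = \sum_{i=0}^{N-1} F_i \, 1_{[t_i,t_{i+1}[}$ on $[a,b]$. Then I would integrate it from $a$ to a point $t$: since the integrand is piecewise constant, the primitive $G(t) = \int_a^t \iota_0^+(F)(s)\,ds$ is the continuous piecewise-linear function whose value at a node $t_k$ is the sum of the areas of the rectangles to its left.

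The key step is to evaluate $G$ at the nodes $t_k$, which is exactly what $\pi$ extracts. Writing $t_k = a + \int$-partition, one has $G(t_k) = \int_a^{t_k} \iota_0^+(F)(s)\,ds = \sum_{i=0}^{k-1} \int_{t_i}^{t_{i+1}} F_i \, ds = \sum_{i=0}^{k-1} (t_{i+1}-t_i) F_i$, using additivity of the integral over the subintervals $[t_i,t_{i+1}]$ and the fact that $\iota_0^+(F)$ is constant equal to $F_i$ there. For $k=0$ the sum is empty and equals zero, consistent with the stated convention that a sum from $i$ to something strictly smaller is zero, and also with $G(a)=0$. This gives $[J_\Delta(F)]_k = \sum_{i=0}^{k-1}(t_{i+1}-t_i)F_i$ for $k=0,\dots,N$.

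For $J_\nabla$ the argument is entirely parallel: $\iota_0^-(F) = \sum_{i=1}^N F_i \, 1_{]t_{i-1},t_i]}$ is constant equal to $F_i$ on $]t_{i-1},t_i]$, so $\int_a^{t_k} \iota_0^-(F)(s)\,ds = \sum_{i=1}^k \int_{t_{i-1}}^{t_i} F_i\,ds = \sum_{i=1}^k (t_i - t_{i-1}) F_i$, the endpoint conventions for the indicator intervals being irrelevant to the integral. Taking $\pi$ of this piecewise-linear primitive yields the claimed formula. The only mild subtlety is bookkeeping at the endpoints ($k=0$ and the half-open versus closed intervals), which the sum convention handles cleanly; there is no real obstacle here, the lemma being a direct computation from the definitions.

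\begin{proof}
By definition $J_\Delta(F) = \pi\left(\int_a^t \iota_0^+(F)\,dt\right)$. Since $\iota_0^+(F) = \sum_{i=0}^{N-1} F_i\, 1_{[t_i,t_{i+1}[}$ is piecewise constant, for any $k\in\{0,\dots,N\}$ additivity of the integral over the subintervals gives
\begin{equation}
\left[J_\Delta(F)\right]_k = \int_a^{t_k} \iota_0^+(F)(s)\,ds = \sum_{i=0}^{k-1} \int_{t_i}^{t_{i+1}} F_i\,ds = \sum_{i=0}^{k-1} (t_{i+1}-t_i)\, F_i,
\end{equation}
where for $k=0$ the sum is empty, hence zero. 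The computation for $J_\nabla$ is identical: $\iota_0^-(F) = \sum_{i=1}^{N} F_i\, 1_{]t_{i-1},t_i]}$ is constant equal to $F_i$ on $]t_{i-1},t_i]$, so
\begin{equation}
\left[J_\nabla(F)\right]_k = \int_a^{t_k} \iota_0^-(F)(s)\,ds = \sum_{i=1}^{k} \int_{t_{i-1}}^{t_i} F_i\,ds = \sum_{i=1}^{k} (t_i-t_{i-1})\, F_i,
\end{equation}
for $k=0,\dots,N$, with the empty sum equal to zero when $k=0$.
\end{proof}
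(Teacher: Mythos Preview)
Your proof is correct and follows exactly the approach the paper intends: the lemma is stated there as ``easily obtained'' without an explicit proof, and your direct unwinding of $J_\Delta = \pi \circ \int_a^t \circ\, \iota_0^+$ (and similarly for $J_\nabla$) via integration of the piecewise-constant function $\iota_0^\pm(F)$ over the subintervals is precisely the intended computation.
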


\begin{figure}[ht!]
\centering
\includegraphics[width=0.4\textwidth,clip]{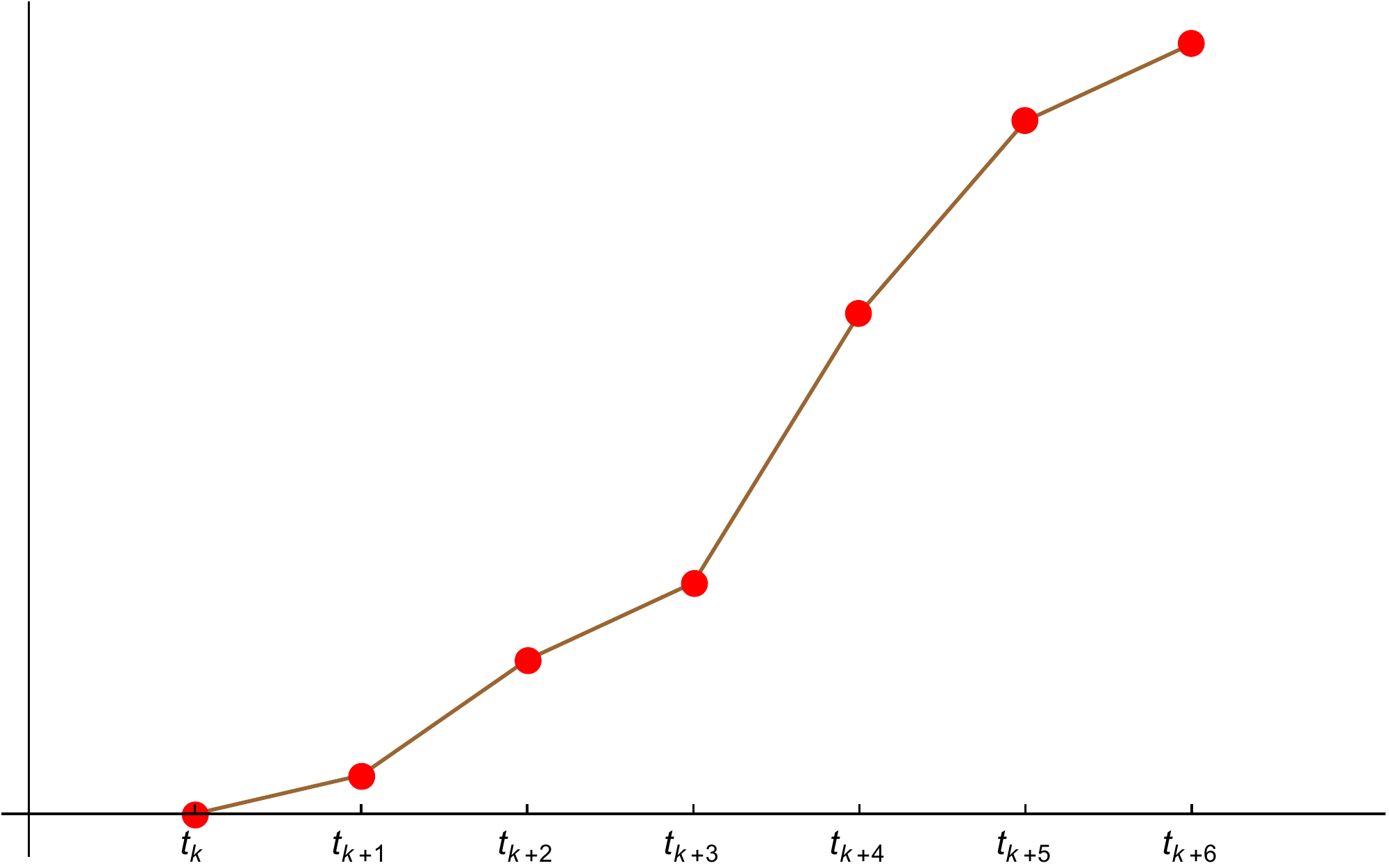}      
\caption{Discrete $\Delta$-integral}
\label{intp1}
\end{figure}

\subsection{Properties of discrete antiderivatives}

\subsubsection{Duality}

The duality between the $\Delta$ and $\nabla$ derivatives induces a corresponding phenomenon for the $\Delta$ and $\nabla$ antiderivatives.

\begin{proposition}
For all $F\in C(\TT,\R)$ we have $J_\Delta \circ \sigma (F) = J_\nabla (F)$ and $J_\nabla \circ \rho (F) = J_\Delta(F)$.
\label{pro_int}
\end{proposition}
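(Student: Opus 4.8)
The plan is to push both identities down to the level of the interpolating step functions, where the statement becomes elementary. By definition $J_\Delta=\pi\circ\int_{a}^{t}\circ\,\iota_0^{+}$ and $J_\nabla=\pi\circ\int_{a}^{t}\circ\,\iota_0^{-}$ share the left factor $\pi$, so it suffices to establish, for every $t\in[a,b]$,
\[
\di\int_{a}^{t}\iota_0^{+}(\sigma(F))(s)\,ds=\di\int_{a}^{t}\iota_0^{-}(F)(s)\,ds
\qquad\text{and}\qquad
\di\int_{a}^{t}\iota_0^{-}(\rho(F))(s)\,ds=\di\int_{a}^{t}\iota_0^{+}(F)(s)\,ds .
\]
First I would expand the integrands using the defining sums of $\iota_0^{+}$ and $\iota_0^{-}$ together with $\sigma(F)(t_i)=F(t_{i-1})$ and $\rho(F)(t_i)=F(t_{i+1})$. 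A single shift of the summation index then shows that in each identity the two piecewise constant functions take the same value on every open interval $(t_k,t_{k+1})$; they can differ only on the finite set $\TT$ of nodes, which is Lebesgue negligible. Since two integrable functions that agree almost everywhere have the same primitive $\int_a^t(\cdot)$, applying $\pi$ to both sides yields $J_\Delta\circ\sigma(F)=J_\nabla(F)$ and $J_\nabla\circ\rho(F)=J_\Delta(F)$. Equivalently, once the first identity is proved the second is obtained by composing it on the right with $\rho$ and invoking the bijectivity relations $\rho\circ\sigma=\sigma\circ\rho=\Id$.

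As an independent check one may argue purely at the discrete level, starting from the explicit formulas obtained above, namely $\left[J_\Delta(G)\right]_k=\di\sum_{i=0}^{k-1}(t_{i+1}-t_i)G_i$ and $\left[J_\nabla(G)\right]_k=\di\sum_{i=1}^{k}(t_i-t_{i-1})G_i$. Substituting $G=\sigma(F)$, respectively $G=\rho(F)$, and carrying out the corresponding change of summation index transforms one sum into the other, the weights $t_{i+1}-t_i=h$ being translation invariant; the boundary terms $k=0$ and $k=N$ are absorbed by the convention that an empty sum vanishes.

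The only genuine difficulty here is bookkeeping: one has to keep track of which of $\TT$, $\TT_+$, $\TT_-$ each function lives on so that the shifts $\sigma$, $\rho$ and the antiderivatives $J_\Delta$, $J_\nabla$ compose consistently. Once the indices are aligned, there is --- in the spirit stressed throughout this part --- no real computation on sums left to do.
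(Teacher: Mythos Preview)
The paper does not actually supply a proof of this proposition; it is stated and the text moves straight on to the discrete integration by parts. Your two approaches---either comparing the step-function lifts $\iota_0^\pm$ before applying $\pi\circ\int_a^t$, or shifting the index in the explicit summation formulas for $J_\Delta$ and $J_\nabla$---are exactly the natural routes, and entirely in the diagrammatic spirit the paper advocates.

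There is, however, a genuine gap. You quote the paper's displayed formulas $\sigma(F)(t_i)=F(t_{i-1})$ and $\rho(F)(t_i)=F(t_{i+1})$ and then assert that ``a single shift of the summation index'' makes the two step functions agree on each open interval. If you actually carry this out with those conventions you find, on $(t_k,t_{k+1})$,
\[
\iota_0^+(\sigma(F))=\sigma(F)_k=F_{k-1}\qquad\text{but}\qquad \iota_0^-(F)=F_{k+1},
\]
which are not equal; the discrete check fails in the same way, and in addition $\sigma(F)_0=F_{-1}$ is undefined so $J_\Delta\circ\sigma$ does not even compose. The identity $J_\Delta\circ\sigma(F)=J_\nabla(F)$ (and likewise the paper's discrete Leibniz rule $\Delta(F\star G)=(\Delta F)\star G+\sigma(F)\star(\Delta G)$) is only correct with $\sigma$ the \emph{forward} shift $\sigma(F)_k=F_{k+1}$ and $\rho$ the backward shift; the paper's Definition of the duality mappings appears to have the roles of $\sigma$ and $\rho$ interchanged. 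Under that intended reading your argument goes through cleanly and the domains match ($\sigma(F)\in C(\TT_+,\R)$, exactly what $\iota_0^+$ needs). You should flag this discrepancy explicitly rather than glide over it, since as literally written the ``shift'' step does not work.
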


\subsubsection{Discrete integration by part}

As in the continuous case, the discrete Leibniz formula for discrete derivatives induces a {\it discrete integration by part} formula for discrete antiderivatives.

\begin{theorem}[Discrete integration by part formula] 
Let $F\in C(\TT ,\R)$ and $G\in C_0 (\TT ,\R )$, we have 
\begin{equation}
\left .
\begin{array}{lll}
\left [ J_{\Delta}(F\star \Delta (G)) \right ]_N & = & - \left [ J_{\Delta} (F \star (\nabla G)) \right ]_N , \\
\left [ J_{\nabla}(F\star \nabla (G)) \right ]_N & = & - \left [ J_{\nabla} (F \star (\Delta G)) \right ]_N .
\end{array}
\right .
\end{equation}
\end{theorem}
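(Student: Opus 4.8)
The plan is to deduce the two discrete integration by parts formulas from the discrete Leibniz rule together with a discrete fundamental theorem of calculus. The key observation is that for a discrete function $H \in C(\TT,\R)$ the quantity $[J_\Delta(\Delta H)]_N$ telescopes: by the explicit form of $J_\Delta$ and $\Delta$ from the lemmas above, $[J_\Delta(\Delta H)]_N = \sum_{i=0}^{N-1} h \cdot \frac{H_{i+1}-H_i}{h} = H_N - H_0$, and likewise $[J_\nabla(\nabla H)]_N = H_N - H_0$. Since $G \in C_0(\TT,\R)$ (i.e. $G_0 = G_N = 0$), both boundary contributions vanish, so $[J_\Delta(\Delta(F\star G))]_N = 0$ and $[J_\nabla(\nabla(F\star G))]_N = 0$.

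First I would apply the discrete Leibniz rule to $F \star G$: over $\TT_+$ we have $\Delta(F\star G) = (\Delta F)\star G + \sigma(F)\star(\Delta G)$. Applying $J_\Delta$ and evaluating at $N$, and using the telescoping identity above with $H = F\star G$ together with $G_0 = G_N = 0$, we get
\begin{equation}
0 = \left[J_\Delta\big((\Delta F)\star G\big)\right]_N + \left[J_\Delta\big(\sigma(F)\star(\Delta G)\big)\right]_N.
\end{equation}
This already has the right shape; the remaining task is to identify the term $[J_\Delta(\sigma(F)\star(\Delta G))]_N$ with $-[J_\Delta(F\star(\nabla G))]_N$ — wait, one must be careful here: the statement as written has $F\star\Delta(G)$ on the left and $F\star\nabla G$ on the right, so I would instead run Leibniz on the product in the form needed and then use the duality lemmas. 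Concretely, using Lemma~\ref{dual-derive} one has $\nabla(\sigma(F)) = \Delta(F)$ and $\nabla G = \Delta(\rho(G))$-type relations, and using Proposition~\ref{pro_int} one has $J_\Delta \circ \sigma = J_\nabla$ and $J_\nabla \circ \rho = J_\Delta$; combining these lets one rewrite $[J_\Delta(\sigma(F)\star(\Delta G))]_N$ as $[J_\nabla(F\star(\nabla(\sigma G)))]_N$ or the desired $[J_\Delta(F\star\nabla G)]_N$ after shifting indices. The cleanest route is probably to verify the final identity directly at the level of the explicit sums: $[J_\Delta(F\star\Delta G)]_N = \sum_{i=0}^{N-1} h F_i \frac{G_{i+1}-G_i}{h}$ and $[J_\Delta(F\star\nabla G)]_N = \sum_{i=1}^{N} h F_i \frac{G_i - G_{i-1}}{h}$, and check that their sum telescopes to $F_N G_N - F_0 G_0 = 0$ by an Abel summation, i.e. a reindexing $j = i+1$ in the first sum.

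The main obstacle is bookkeeping with the index sets $\TT$, $\TT_+$, $\TT_-$ and the shift maps $\sigma$, $\rho$: one must make sure that when the Leibniz rule is applied over $\TT_+$ and then $J_\Delta$ is applied (which extends back to all of $\TT$), the term $\sigma(F)\star(\Delta G)$, which lives on $\TT_-$, is correctly transported to $\TT_+$ before summation, and that the boundary terms dropped are exactly $G_0$ and $G_N$ and nothing else. Once the index conventions (including the paper's convention that an empty sum is zero) are pinned down, the telescoping is immediate and the second formula follows by the same argument with $\Delta \leftrightarrow \nabla$, $\sigma \leftrightarrow \rho$, and $\TT_+ \leftrightarrow \TT_-$ interchanged, invoking Proposition~\ref{pro_int} in the analogous way. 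I would present the $\Delta$ case in full and remark that the $\nabla$ case is symmetric.
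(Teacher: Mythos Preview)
Your approach is essentially the paper's: discrete Leibniz rule, the fundamental theorem $[J_\Delta(\Delta H)]_N = H_N - H_0$ to kill the boundary terms via $G\in C_0(\TT,\R)$, and the duality Lemma~\ref{dual-derive} to trade $\Delta$ for $\nabla$, with the $\nabla$ case declared symmetric. The one tactical difference is that the paper applies Leibniz to the shifted product $\rho(F)\star G$ rather than to $F\star G$; since $\sigma\circ\rho=\Id$, this produces $F\star\Delta G$ directly as one summand and $\Delta(\rho F)\star G=\nabla F\star G$ as the other, so the identification step you flag (``wait, one must be careful here'') and the fallback to explicit Abel summation become unnecessary.
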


\begin{proof}
We only make the proof for $J_{\Delta}$, the case of $J_{\nabla}$ being similar. Using the discrete Leibniz formula for $\Delta$ we have $J_{\Delta}[F\star \Delta G]=J_{\Delta}\left [  \Delta [ \rho (F) \star G ] - \Delta [\rho (F)] \star G  \right ]$ over $\TT_+$. Using Theorem \ref{fondamental} and the fact that $G\in C_0 (\TT ,\R)$, we have $ \left [ J_{\Delta}\circ \Delta(\rho (F)\star G) \right ]_N = \rho(F)_N \cdot G_N -\rho (F)_0 G_0 =0$ . As $\Delta [\rho (F)] =\nabla (F)$ over $\TT^{\pm}$ by Lemma \ref{dual-derive}, this concludes the proof.
\end{proof}

The previous result is never stated as below in classical Textbooks about finite differences. This is due to the fact that the operators are not usually used to write such a formula but directly on summations formulae only speaking of rearranging the terms of the sum (see (\cite{mars},p.363) for a typical example).

\subsubsection{Scalar product}

Let $f,g \in C([a,b],\R^d)$. The classical scalar product on $L^2$ functions denoted by $<\cdot,\cdot>_{L^2}$ is defined by  $<f,g>_{L^2}=\di\int_{a}^{b} <f(t),g(t)> \ dt$. Using the {\it discrete product} and the lift mappings, we can transport the $L^2$ scalar product over discrete functions. 

\begin{definition}[Discrete scalar product]
We call forward (resp. backward) discrete scalar product the bilinear mapping defined for all $F,G \in C(\TT ,\R^d )$ by 
\begin{equation}
\langle F , G\rangle_+ =\di J_{\Delta} [\langle F, G \rangle_{\star}  ]_N \ \ 
\left ( 
\mbox{\rm resp.}\ \  
\langle F , G\rangle_- =\di J_{\nabla} [\langle F, G \rangle_{\star}  ]_N \,
\right ) .
\end{equation}
\end{definition}

\begin{remark}
The discrete forward (resp. backward) discrete scalar product is {\it degenerate}. Indeed, the equation $\langle F,F \rangle _+ =0$ (resp. $\langle F,F \rangle _- =0$ ) induces only
\begin{equation}
F_k = 0 \quad \text{for} \ k=0,...,N-1, \quad  \left( \text{resp.} \quad F_k = 0 \quad \text{for} \ k=1,...,N \right).
\end{equation}
\end{remark}

Using the definition of discrete antiderivatives, we prove the following lemma :

\begin{lemma}
The following diagram commutes 
\[\xymatrixcolsep{5pc} \xymatrixrowsep{5pc}
\xymatrix{ \underset{(\iota_0^{+}(F), \iota_0^{+}(G))}{P_0^{+}([a,b],\R^d ) \times P_0^{+}([a,b],\R^d)}\ar[r]^{ \langle \cdot ,\cdot \rangle } & \underset{\langle \iota_0^{+}(F) ,\iota_0^+ (G) \rangle }{\R)} \ar[d]^{\mbox{\rm Id}} \\ \underset{(F,G)}{C(\TT,\R^d )\times C(\TT ,\R^d )} \ar[u]^{\iota_0^{+}} \ar[r]^{\langle \cdot ,\cdot  \rangle_{\Delta} } & \underset{\langle  F ,G \rangle _{\Delta} }{\R} }
\]

\end{lemma}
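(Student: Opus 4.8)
The plan is to unwind the definitions on both paths of the diagram and check that they agree. Starting from a pair $(F,G)\in C(\TT,\R^d)\times C(\TT,\R^d)$, the lower-then-right route sends it to $\langle F,G\rangle_\Delta = [J_\Delta \langle F,G\rangle_\star]_N$, which by the explicit formula for $J_\Delta$ (the Lemma giving $[J_\Delta(H)]_k = \sum_{i=0}^{k-1}(t_{i+1}-t_i)H_i$) and the definition of $\langle\cdot,\cdot\rangle_\star$ equals $\sum_{i=0}^{N-1}(t_{i+1}-t_i)\sum_{j=1}^d (F_j)_i (G_j)_i$. The upper route sends $(F,G)$ first via $\iota_0^+$ to the pair of step functions $(\iota_0^+(F),\iota_0^+(G))$, then takes their pointwise $\R^d$-inner product and integrates over $[a,b]$ (this is the arrow $\langle\cdot,\cdot\rangle$ in the top row, i.e. $\langle u,v\rangle = \int_a^b \langle u(t),v(t)\rangle\,dt$), then applies $\mathrm{Id}$. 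So I would compute $\int_a^b \langle \iota_0^+(F)(t),\iota_0^+(G)(t)\rangle\,dt$.

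The key step is then to observe that $\iota_0^+(F)(t) = \sum_{i=0}^{N-1} F_i\,1_{[t_i,t_{i+1}[}(t)$ componentwise, so on the subinterval $[t_i,t_{i+1}[$ the integrand $\langle \iota_0^+(F)(t),\iota_0^+(G)(t)\rangle$ is constant and equal to $\langle F_i,G_i\rangle = \sum_{j=1}^d (F_j)_i(G_j)_i$; integrating over that subinterval contributes $(t_{i+1}-t_i)\langle F_i,G_i\rangle$, and summing over $i=0,\dots,N-1$ (the indicator functions partition $[a,b]$ up to a null set) gives exactly $\sum_{i=0}^{N-1}(t_{i+1}-t_i)\langle F_i,G_i\rangle$. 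This coincides with the expression obtained along the lower route, so the diagram commutes. One small point worth spelling out is that the products $\iota_0^+(F)\cdot\iota_0^+(G)$ of two $P_0^+$ functions, read componentwise, is again a $P_0^+$ function whose value on $[t_i,t_{i+1}[$ is $F_i\cdot G_i$, which is precisely $\iota_0^+(F\star G)$ in each component — this is the discrete-product compatibility already built into the definition of $\star$ and is the reason the two routes see the same data.

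There is no real obstacle here: everything reduces to the fact that integration of a step function is a finite sum with the subinterval lengths as weights, which is exactly what $J_\Delta$ encodes by construction. The only thing to be careful about is bookkeeping with the index ranges and the half-open intervals (so that the partition of $[a,b]$ is used correctly and no endpoint is double-counted), and the reduction of the $\R^d$-valued interpolation to its scalar components via the stated convention $\iota(F)=(\iota(F_1),\dots,\iota(F_d))$. Hence the proof is essentially a one-line computation once the explicit forms of $\iota_0^+$ and $J_\Delta$ are inserted.

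\begin{proof}
Let $(F,G)\in C(\TT,\R^d)\times C(\TT,\R^d)$ with $F=(F_1,\dots,F_d)$ and $G=(G_1,\dots,G_d)$. Following the lower horizontal arrow and using the explicit form of $J_\Delta$ together with the definition of $\langle\cdot,\cdot\rangle_\star$, we get
\[
\langle F,G\rangle_\Delta = \big[J_\Delta\langle F,G\rangle_\star\big]_N = \sum_{i=0}^{N-1}(t_{i+1}-t_i)\sum_{j=1}^d (F_j)_i\,(G_j)_i .
\]
Now follow the upper route. By definition, $\iota_0^+(F)=(\iota_0^+(F_1),\dots,\iota_0^+(F_d))$ and each $\iota_0^+(F_j)=\sum_{i=0}^{N-1}(F_j)_i\,1_{[t_i,t_{i+1}[}$, and similarly for $G$. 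Hence, for $t\in[t_i,t_{i+1}[$,
\[
\langle \iota_0^+(F)(t),\iota_0^+(G)(t)\rangle = \sum_{j=1}^d (F_j)_i\,(G_j)_i ,
\]
which is constant on that subinterval. Since the intervals $[t_i,t_{i+1}[$, $i=0,\dots,N-1$, partition $[a,b]$ up to a set of measure zero, integrating gives
\[
\int_a^b \langle \iota_0^+(F)(t),\iota_0^+(G)(t)\rangle\,dt = \sum_{i=0}^{N-1}(t_{i+1}-t_i)\sum_{j=1}^d (F_j)_i\,(G_j)_i .
\]
Applying $\mathrm{Id}$ leaves this unchanged. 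The two expressions coincide, so the diagram commutes.
\end{proof}
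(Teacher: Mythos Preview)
Your proof is correct and is exactly the direct unwinding of definitions that the paper has in mind; the paper itself does not spell out a proof beyond the remark that the lemma follows ``using the definition of discrete antiderivatives.'' Your computation supplies precisely that verification, with the same idea: integrate the step function $\langle \iota_0^+(F),\iota_0^+(G)\rangle$ over $[a,b]$ and recognise the resulting sum as $[J_\Delta \langle F,G\rangle_\star]_N$.
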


The same results occur with $\nabla$ instead of $\Delta$ and $\iota_0^-$ instead of $\iota_0^+$. \\

In other word, the discrete scalar product is just the usual $L^2$ scalar product with $L^2$ functions replaces by discrete functions, the usual multiplication by its discrete analogue and the classical antiderivative by its discrete pendant. This phenomenon is in fact general. We will formalize this property in the next part using discrete embeddings formalisms.

\section{Discrete classical results}

In this Section, we derive two discrete analogue of classical results in Analysis : the fundamental theorem of differential calculus and the Dubois-Raymond lemma.

\subsection{Discrete fundamental theorem of differential calculus}

As we are looking for the transfer of algebro-analytic properties of derivatives and antideratives a natural question is up to which extent the {\it fundamental theorem of differential calculus} (see \cite{hairer},Theorem 6.13 p.239) is preserved ? A very nice feature of our derivation is that this theorem is the following discrete analogue of this result:

\begin{theorem}[Fundamental theorem of the discrete differential calculus]
\label{fondamental}
For all $F \in C(\TT,\R)$ we have 
\begin{equation}
\left .
\begin{array}{lll}
J_\Delta \circ \Delta (F) & = & F - \mathbb{F}_0 \quad \text{and} \quad \Delta\circ J_\Delta F = F,\\
J_\nabla \circ \nabla (F) & = & F - \mathbb{F}_0 \quad \text{and} \quad \nabla\circ J_\nabla F = F.
\end{array}
\right .
\end{equation}
\end{theorem}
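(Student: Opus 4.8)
The plan is to prove the four identities by direct computation on the finite families $F_k = F(t_k)$, using the explicit formulas already established. The statement splits into two pairs by symmetry ($\Delta$ versus $\nabla$), and within each pair into two identities; I would do the forward ($\Delta$) case in full and remark that the backward ($\nabla$) case is identical after replacing $\iota_0^+$ by $\iota_0^-$ and shifting indices (or, more slickly, by invoking the duality Proposition \ref{pro_int} together with Lemma \ref{dual-derive}).

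First I would treat $\Delta \circ J_\Delta F = F$. By the explicit form of $J_\Delta$, $[J_\Delta(F)]_k = \sum_{i=0}^{k-1}(t_{i+1}-t_i)F_i = h\sum_{i=0}^{k-1}F_i$, so $[J_\Delta(F)]_{k+1} - [J_\Delta(F)]_k = h F_k$; dividing by $h$ and using the explicit form of $\Delta$ (Lemma giving $(\Delta G)_k = (G_{k+1}-G_k)/h$) gives $[\Delta J_\Delta F]_k = F_k$ for $k = 0,\dots,N-1$, i.e.\ $\Delta \circ J_\Delta F = F$ on $\TT_+$, which is the domain where $\Delta J_\Delta F$ lives. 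Next, for $J_\Delta \circ \Delta(F) = F - \mathbb{F}_0$, I would compute $[J_\Delta(\Delta F)]_k = \sum_{i=0}^{k-1} h\,(\Delta F)_i = \sum_{i=0}^{k-1} h \cdot \frac{F_{i+1}-F_i}{h} = \sum_{i=0}^{k-1}(F_{i+1}-F_i)$, which telescopes to $F_k - F_0$. Since $\mathbb{F}_0 = \pi(F_0\,\mathrm{Id})$ is by definition the discrete function constantly equal to $F_0$, this reads $J_\Delta \circ \Delta(F) = F - \mathbb{F}_0$, using also the convention that the empty sum ($k=0$) is zero so that the identity holds at $k=0$ as well.

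For the $\nabla$ half I would argue by duality: for $G \in C(\TT,\R)$, Proposition \ref{pro_int} gives $J_\nabla(G) = J_\Delta(\sigma G)$ wait — more directly $J_\nabla \circ \rho = J_\Delta$ and $J_\Delta \circ \sigma = J_\nabla$, and Lemma \ref{dual-derive} gives $\nabla(\sigma F) = \Delta F$ and $\nabla F = \Delta(\rho F)$; combining, $J_\nabla \circ \nabla(F) = J_\nabla \circ \Delta(\rho F)$, and matching the index shifts one recovers $F - \mathbb{F}_0$ from the already-proven $\Delta$ identity. Alternatively — and this is what I would actually write to avoid bookkeeping errors with $\sigma,\rho$ — I would simply rerun the two-line telescoping computation with the explicit form $[J_\nabla(F)]_k = \sum_{i=1}^k h F_i$ and $(\nabla F)_k = (F_k - F_{k-1})/h$: then $[J_\nabla(\nabla F)]_k = \sum_{i=1}^k (F_i - F_{i-1}) = F_k - F_0$, and $[\nabla J_\nabla F]_k = \frac{1}{h}([J_\nabla F]_k - [J_\nabla F]_{k-1}) = \frac{1}{h}(hF_k) = F_k$ for $k = 1,\dots,N$.

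There is no real obstacle here — the content is a telescoping sum — so the only thing to be careful about is the domains: $\Delta J_\Delta F$ and $\nabla J_\nabla F$ are a priori only defined on $\TT_+$ and $\TT_-$ respectively, and one must check the composite $J_\Delta \circ \Delta$ does land back in $C(\TT,\R)$ with the stated value at the endpoint indices $k=0$ and $k=N$, which is exactly where the empty-sum convention and the definition of $\mathbb{F}_0$ are doing the work. I would state explicitly that "$=$" in the first equation of each line is an equality in $C(\TT,\R)$ while the second is an equality in $C(\TT_+,\R)$ (resp.\ $C(\TT_-,\R)$), so that the reader is not confused about where each side is evaluated.
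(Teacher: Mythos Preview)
Your argument is correct: the telescoping computation gives all four identities with no gaps, and you are right to flag the domain issue for $\Delta\circ J_\Delta$ and $\nabla\circ J_\nabla$.

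However, the route is genuinely different from the paper's. The paper does \emph{not} touch the explicit sums at all; instead it works entirely at the level of the defining compositions $\Delta=\pi\circ d^+\circ\iota_1$ and $J_\Delta=\pi\circ\int_a^t\circ\iota_0^+$, and the key step is the observation that $\iota_0^+\circ\pi=\Id$ on $P_0^+([a,b],\R)$ (and $\iota_1\circ\pi=\Id$ on $P_1([a,b],\R)$). Once these intermediate $\pi$'s are absorbed, what remains is literally the \emph{continuous} fundamental theorem of calculus applied to $\iota_1(F)$ (resp.\ $\iota_0^+(F)$), followed by a final $\pi$. So the discrete theorem is exhibited as a shadow of the classical one under the embedding, which is exactly the ``functorial'' point the paper is trying to make. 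Your proof is shorter and more self-contained, but it treats the result as a combinatorial identity about finite sums rather than as a structural consequence of the embedding; in the spirit of this paper, that is precisely the kind of ``computation on sums'' the authors say they want to avoid.
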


\begin{proof}
We make the proof only for $\Delta$ and $J_{\Delta}$. The proof for $\nabla$ and $J_{\nabla}$ being similar. \\

As $d^{+}\circ \iota_1(F) \in P_0^+([a,b],\R)$, we deduce that ${\iota_0^{+}} \circ \pi ( d^{+}\circ \iota_1(F) ) =d^{+}\circ \iota_1(F)$. As a consequence, we obtain
\begin{equation}
\left .
\begin{array}{ll}
J_\Delta\circ\Delta(F)&=\pi \circ \di{\int_a^t} \circ {\iota_0^{+}} \circ \pi \circ d^{+}\circ \iota_1(F) =\pi\circ{\int_{a}^{t}}d^{+}(\iota_1(F)) ,\\
&=\pi\left(\iota_1(F)(t)-\iota_1(F)(0)\right)  =F-\mathbb{F}_0.
\end{array}
\right .
\end{equation}
Second we remark that over $P_1([a,b],\R)$ we have $\iota_1 \circ \pi = Id$. As $\di\int_{a}^{t} \iota_0^{+}(F) \in P_1([a,b],\R)$ then by definition we have
\begin{equation}
\left .
\begin{array}{ll}
\Delta\circ J_\Delta(F)&=\pi \circ d^{+} \circ \iota_1 \circ J_\Delta (F) =\pi\circ d^{+} \circ \iota_1 \circ\pi \circ \di\int_{a}^{t} \iota_0^{+}(F)  ,\\
&=\pi\left(d^{+}\circ\di\int_{a}^{t} \iota_0^{+}(F)\right)=\pi\left(\iota_0^{+}(F)(t)\right) =F.
\end{array}
\right .
\end{equation}

This concludes the proof.
\end{proof}

\subsection{Discrete Dubois-Raymond lemma}

The discrete version of the Dubois-Raymond lemma is valid. We first introduce the set $C_0(\TT,\R)\subset C(\TT,\R)$ defined by

\begin{equation}
C_0(\TT,\R)=\{G\in C(\TT,\R),\ G_{0}=G_{N}=0\}.
\end{equation}

\begin{lemma}
Let $F\in C(\TT,\R)$ such $\left [ J_{\Delta}(F\star G) \right ]_N =0$ for all $G\in C_0(\TT,\R)$ then $F_{k}=0$ for $k=1,\ N-1$.
\end{lemma}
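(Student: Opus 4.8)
The plan is to compute $[J_\Delta(F\star G)]_N$ explicitly using the formula from the earlier lemma and then choose suitable test functions $G\in C_0(\TT,\R)$ to isolate each value $F_k$. By the explicit form of $J_\Delta$ and the definition of $\star$, one has
\[
\left[J_\Delta(F\star G)\right]_N=\sum_{i=0}^{N-1}(t_{i+1}-t_i)\,F_i\,G_i=h\sum_{i=1}^{N-1}F_i\,G_i,
\]
where the $i=0$ term drops out because $G_0=0$ for $G\in C_0(\TT,\R)$, and $G_N$ never appears in the sum $J_\Delta$ runs only up to $N-1$. So the hypothesis says $\sum_{i=1}^{N-1}F_i G_i=0$ for every discrete function $G$ with $G_0=G_N=0$.

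Next I would exploit the freedom in $G$: for a fixed $k\in\{1,\dots,N-1\}$, take $G=G^{(k)}$ to be the discrete function with $G^{(k)}_k=1$ and $G^{(k)}_j=0$ for all $j\neq k$. This $G^{(k)}$ lies in $C_0(\TT,\R)$ since $k\notin\{0,N\}$. Substituting into the identity above gives $h\,F_k=0$, hence $F_k=0$. Letting $k$ range over $1,\dots,N-1$ yields the claim $F_k=0$ for $k=1,\dots,N-1$.

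There is essentially no obstacle here; the only points requiring a little care are bookkeeping ones: first, that the summation defining $[J_\Delta\,\cdot\,]_N$ genuinely runs over indices $0,\dots,N-1$ so that the endpoint constraint $G_N=0$ is automatically irrelevant while $G_0=0$ kills the lone $i=0$ term (this is exactly why the conclusion is only about $k=1,\dots,N-1$ and not $k=0$); and second, that the "delta-like" test functions $G^{(k)}$ are indeed admissible elements of $C_0(\TT,\R)$, which holds precisely because the indices $1,\dots,N-1$ avoid the two boundary nodes. One could alternatively phrase the argument as noting that the map $G\mapsto[J_\Delta(F\star G)]_N$ is, up to the factor $h$, the restriction to $C_0(\TT,\R)$ of the bilinear form $\langle\cdot,\cdot\rangle_\star$ paired against $F$, and that the $G^{(k)}$ span $C_0(\TT,\R)$; but the direct evaluation above is the shortest route and mirrors the continuous Dubois--Raymond argument with bump functions replaced by discrete indicators.
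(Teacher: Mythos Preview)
Your proof is correct. The paper takes a slightly different route: instead of using the family of ``delta'' test functions $G^{(k)}$, it uses a single test function $G$ defined by $G_k=F_k$ for $1\le k\le N-1$ and $G_0=G_N=0$, so that $[J_\Delta(F\star G)]_N=h\sum_{k=1}^{N-1}F_k^2=0$, forcing every term to vanish at once. Both arguments are standard and equally short; yours isolates each $F_k$ directly and makes explicit that the $G^{(k)}$ span $C_0(\TT,\R)$, while the paper's sum-of-squares trick more closely parallels the usual continuous Dubois--Raymond proof and avoids indexing a family of test functions.
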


\begin{proof}
Let $G\in C_0(\TT,\R)$. We choose $G$ such that $G_k=F_k$ for $1\le k \le N-1$. Hence, we obtain
$\left [ J_{\Delta}(F\star G) \right ]_N =\di\sum_{k=1}^{N-1}F_{k}^{2}h=0$. We deduce $F_k =0$ for $k=1 ,\dots ,N-1$. This concludes the proof.
\end{proof}

A stronger result can be derived when $F$ is replaced by $\Delta F$ :

\begin{lemma}
Let $F\in C(\TT,\R)$ such that $\left [ J_{\Delta}((\Delta F)\star G) \right ] _N =0$ for all $G\in C(\TT_+ ,\R) $ then $\Delta F=0$.
\end{lemma}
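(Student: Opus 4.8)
The plan is to exploit the explicit formula for $J_\Delta$ together with the fact that test functions $G \in C(\TT_+,\R)$ are now \emph{arbitrary} at every node $t_0,\dots,t_{N-1}$, without the boundary constraints $G_0 = G_N = 0$ that appeared in the previous lemma. Writing out the hypothesis with the explicit form of the $\Delta$-antiderivative gives
\begin{equation}
\left[ J_\Delta((\Delta F)\star G) \right]_N = \sum_{i=0}^{N-1} h\, (\Delta F)_i \, G_i = 0 \quad \text{for all } G \in C(\TT_+,\R).
\end{equation}

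The key step is then a standard "localization": since the values $G_0,\dots,G_{N-1}$ may be chosen freely, fix an index $j \in \{0,\dots,N-1\}$ and take $G$ to be the discrete indicator of $t_j$, i.e. $G_i = \delta_{ij}$ (equivalently, one may simply take $G_i = (\Delta F)_i$ to get $\sum_{i=0}^{N-1} h\,(\Delta F)_i^2 = 0$ directly). Either choice forces $(\Delta F)_j = 0$; running $j$ over all of $\TT_+$ gives $\Delta F = 0$ on $\TT_+$, which is exactly the assertion. Note that no conclusion is drawn about $F$ itself beyond $\Delta F = 0$ — by the kernel lemma this is equivalent to $F = \mathbb{F}_0$ being constant — so the statement as written is the sharp one; in particular one does \emph{not} recover all the $F_k$, only their differences.

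The only mild subtlety, and the step I would be most careful about, is bookkeeping of the index ranges: $\Delta F \in C(\TT_+,\R)$ so it is indexed by $k = 0,\dots,N-1$, and the admissible $G$ range over the same node set $\TT_+$, so the pairing $\langle \Delta F, G\rangle_\star$ lives on $\TT_+$ and $J_\Delta$ of it evaluated at $N$ collects precisely the $N$ terms $i = 0,\dots,N-1$ with weight $h = t_{i+1}-t_i$. Once this matching of index sets is verified, the argument is immediate and there is no real obstacle; it is genuinely the discrete shadow of the continuous fundamental lemma of the calculus of variations, made trivial by the freedom in the test functions.
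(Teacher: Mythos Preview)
Your proof is correct and essentially coincides with the paper's. The paper writes out $\left[J_\Delta(\Delta F\star G)\right]_N=\sum_{k=0}^{N-1}h\,(\Delta F)_k\,G_k$ and then takes $G=\Delta F$ to force each $(\Delta F)_k=0$; this is exactly the alternative you mention in parentheses, while your primary indicator-function choice $G_i=\delta_{ij}$ is a trivially equivalent localization of the same computation.
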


\begin{proof}
The proof follows from a simple computation. As $\left [ J_{\Delta}(\Delta F\star G) \right ]_N =\di\sum_{k=0}^{N-1}\frac{F_{k+1}-F_{k}}{h}G_{k}h$, we obtain, taking $G=\Delta F\in C(\TT_+ ,\R)$ that $F_{k+1}-F_{k}=0$ for all $k=0$, , $N-1$, so that $F=\mathbb{F}_0$. As a consequence, $F$ is a constant for $\Delta$ which concludes the proof.
\end{proof}

Similar results can be obtained for the $\nabla$-derivative.

\part{Discrete embedding formalisms}

We use the previous abstract approach to finite differences in order to provide a formal definition of a discrete analogue for differential equations, functional and other objects which can be defined using integrals, derivatives and functions. We first define a general abstract procedure and give three natural discrete generalisation of a differential equations. This point of view allows us to more precisely determine on which assumptions a discretisation can be constructed. 

\section{Abstract discrete embedding}

In this Section, we consider a general {\it formal functional} made of symbols $d/dt$ and $\int$ acting on a given set of functions $x$,$y$, etc. We denote such a formal functional by 
\begin{equation}
\label{formal}
F(t,x,y; d/dt ,\int ) ,
\end{equation}
as long as the expression defined by $F$ is well defined. \\

A {\it formal relation} will be the data of a formal functional satisfying 
\begin{equation}
\label{relation}
F(t,x,y; d/dt ,\int ) =0.
\end{equation}
A classical formal relation is given by a first order differential equation $dx/dt -f(x,t)=0$. \\

We can now define what is a discrete embedding of an abstract functional or relation.

\begin{definition}[Abstract discrete embedding] Let $\mathcal{E}$ be a given discrete embedding and $F$ be a formal functional defined by (\ref{formal}). The discrete embedding of $F$ denoted by $F_{\mathcal{E}}$ is defined for all $X\in \mathcal{E}$ by
\begin{equation}
F_{\mathcal{E}} (T,X) = F(T,X ; (d/dt)_{\mathcal{E}} , \int_{\mathcal{E}} ) ,
\label{discreteformal}
\end{equation}
where $\mathcal{E}$, $(d/dt)_{\mathcal{E}}$ and $\int_{\mathcal{E}}$ are the discrete functional space, the discrete derivative and discrete antiderivative which are fixed.
\label{discretembed}
\end{definition}

\begin{remark}
The previous procedure deals with the minimal objects used to write a given differential relation. In many situations as for example geometry, a differential relation put in evidence some particular operators, like the Laplacian, from which a differential equations is constructed. In such a context, one can be conducted to define a discrete embedding directly focusing on the given operator and its algebraic or geometric properties. The discrete embedding then follows the same lines as in Definition \ref{discretembed} but the "functorial" property that we are looking for is destroyed. We refer to \cite{CL2014} for more details. 
\end{remark}

\section{Application to ordinary differential equations : the three forms}

In this Section, we apply the previous formalism for ordinary differential equations. Using different representations of a given differential equation (differential or integral form, variational), we obtain discrete analogues which do not give always the same object. We have then multiplicity of {\it discrete realisations} of a given differential equations. This problem is in fact relevant in all embedding formalisms and leads to the {\it coherence problem} asking for conditions under which such representations coincide. 

\subsection{Discrete differential embedding}

Let $ x\in \R^d$, we consider the ordinary differential equations

\begin{equation}
\frac{dx}{dt}=f(t,x). \label{eqdiff}
\end{equation}

Using the finite differences embedding, the discrete $\Delta$ version of this equation is

\begin{equation}
\Delta X =f(T,X)\ , X \in C(\TT,\R^d)), T \in C(\TT,\R)
\end{equation}

As $\Delta X$ is defined on $\mathbb{T}_\Delta$, we obtain for each $i=0,...,N-1$

\begin{equation}
\frac{X_{i+1}-X_{i}}{h}=f(T_{i}, X_{i})\ ,
\end{equation}

where $T_{i}=a+i \frac{b-a}{h}$.

Also we have the discrete $\nabla$ version of this equation which is

\begin{equation}
\nabla X =f(T,X)\ , X \in C(\TT,\R^d)), T \in C(\TT,\R).
\end{equation}

As $\nabla X$ is defined on $\mathbb{T}_\nabla$, we obtain for each $i=1,...,N$

\begin{equation}
\frac{X_{i}-X_{i-1}}{h}=f(T_{i}, X_{i})\ ,
\end{equation}

\subsection{Discrete integral embedding}

The integral formulation of the previous ordinary differential equation is given by
\begin{equation}
x(t)\ =x(0)+\int_{0}^{t}f(s, x(s))ds. \label{eqint}
\end{equation}
The forward discrete embedding of this equation is then given by
\begin{equation}
X=\mathbb{X}_0+J_{\Delta}(f(T,X)),
\end{equation}
which we call the {\it forward discrete integral embedding}. By definition, this equation is equivalent to $X_{i}=X_0+h\sum_{j=0}^{i-1}f(T_j,X_j)$, $i=1,...,N$. As a consequence, we obtain $X_{i+1}-X_i=hf(T_i, X_i)$, $i=0,...,N-1$, which is the classical {\it one step forward Euler scheme} in Numerical Analysis (\cite{dema},Chap.V,$\S$.2.3).\\

In the same way one can obtain the {\it backward discrete integral embedding} of this equation which is then given by
\begin{equation}
X=\mathbb{X}_0 +J_{\nabla}(f(T,X)).
\end{equation}
By definition, this equation is equivalent to $X_{i}=X_0+h\sum_{j=0}^{i-1}f(T_j,X_j)$, $i=1,...,N$. Easy computations lead to $X_{i}-X_{i-1}=hf(T_i, X_i)$, $i=0,...,N-1$, which is the classical {\it one step backward Euler scheme} in Numerical Analysis.

\begin{remark}
The finite differences differential embedding of the equation is equivalent to the differential case. As a consequence, we see that in this simple case, we have coherence between the two discrete versions of the equation.
\end{remark}

\subsection{Discrete variational embedding}

In this Section, we define the discrete variational embedding of a second order differential equation which is Lagrangian. Our derivation is compared with the classical work of J.E. Marsden and M. West \cite{mars} (see also \cite{lubi}) about the discrete calculus of variation and variational integrators, in order to explain the interest of our abstract framework. 

\subsubsection{Lagrangian systems}

In this section, we recall classical definitions concerning Lagrangian systems.

\begin{definition}
A Lagrangian functional is an application denoted by $\mathcal{L} : C^{2}([a,b],\R^{d} ) \rightarrow \R$ and defined by $\mathcal{L}{q}=\di\int_{a}^{b} L(q(t),\dot{q}(t),\ t)dt$, where $L : [a,\ b]\times \R^{d}\times \R^{d} \rightarrow \R$ called Lagrangian and denoted by $L(t,x,v)$.
\end{definition}

The functional $\mathcal{L}$ is also called action functional.

Let $\mathcal{L}$ be a Lagrangian functional, we denote by $D\mathcal{L}(q)(w)$ the Fr\'{e}chet derivative of $\mathcal{L}$ in $q$ along the direction $w$ in $C^{2}([a,\ b],\ \mathbb{R}^{d})$ , i.e.
\begin{equation}
D\mathcal{L}(q)(w)=\lim_{\epsilon \rightarrow 0}\frac{\mathcal{L}(q+\epsilon w)-\mathcal{L}(q)}{\epsilon}.
\end{equation}

An {\it extremal} (or {\it critical point}) of a Lagrangian functional $\mathcal{L}$ is a trajectory $q$ such that $D\mathcal{L}(q)(w)=0$ for any {\it variations} $w$ (i.e. $w \in C^1_0([a,b],\R^{d}):=\{ w \in C^1([a,b],\R^d), \; w(a) = w(b) = 0\}$) .

\subsubsection{Discrete Lagrangian functionals}
The discrete calculus of variations is defined over discrete Lagrangian functionals which are obtained using the discrete embedding that we have fixed. We introduce $\TT^{\pm} :=\TT \backslash \{t_0,t_N\}$.

\begin{definition}
Let $L$ be an admissible Lagrangian function and $\mathcal{L}$ the associated functional. The discrete forward Lagrangian functional $\mathcal{L}_\Delta$ associated to $L$ with the $\Delta$ integral is defined by
\begin{equation}
\mathcal{L}_{\Delta}(X)=\left [ J_{\Delta} (L(T,X,\Delta X)) \right ]_N .
\end{equation}
\end{definition}

The previous form is completely fixed once one has given a discrete embedding formalism. We clearly see the relation between the classical function and the discrete one. 

\subsubsection{Comparison with Marsden-West definition}

In (\cite{mars} p.363 and $\S$.1.3 p.370-371), the authors define discrete Lagrangian system for which they do not preserve the classical form of the continuous functional. They introduce a discrete Lagrangian given by 
\begin{equation}
\label{discrete-lagrange-marsden}
L_d (X_k ,X_{k+1} , h ) = h L (X_k , \Delta (X)_k , t_k ) ,
\end{equation}
which gives the discrete functional 
\begin{equation}
\label{lagrange-marsden}
\mathcal{L}_h (X)=\di\sum_{k=0}^{N-1} L_d (X_k ,X_{k+1} ,h) .
\end{equation}
The algebraic structure of the classical functional is then destroyed. This point has an important consequence in the derivation of the discrete Euler-Lagrange equation in (\cite{mars},p.363 and $\S$.1.3, Theorem 1.3.1 p.371). Indeed, as we will see, the form obtained in \cite{mars} does not put in evidence the complete analogy between the discrete Euler-Lagrange equation and the classical one. 

The same remark applies to the presentation made by E. Hairer, C. Lubich and G. Wanner in the book {\it geometric numerical integration} (see \cite{lubi}, formula (6.5) and (6.6) p.192-195).

\subsubsection{Discrete calculus of variations and discrete Euler-Lagrange equations}

An element of $C_0(\TT,\R)$ is called a {\it discrete variation}. As $C(\TT,\R)$ is a linear space, we can define the Frechet derivative of $\mathcal{L}_{\Delta}$ (resp. $\mathcal{L}_{\nabla}$) along a given direction $H \in C(\TT,\R)$ and denoted by
\begin{equation}
\mathcal{D}\mathcal{L}_{\Delta}(X)(H)=\lim_{\epsilon\rightarrow 0}\frac{1}{\epsilon}(\mathcal{L}_{\Delta}(\mathrm{X}+\epsilon H) -\mathcal{L}_{\Delta}(H)) .
\end{equation}
The corresponding notion of critical points is given by:

\begin{definition}
A discrete critical point $X\in C(\TT,\R)$ verify $\mathcal{D}\mathcal{L}_{\Delta}(X)(H)=0$ for all $H\in C_0(\TT,\R)$.
\end{definition}

We obtain the following {\it discrete Euler-Lagrange equation} : 

\begin{theorem}[Discrete Euler-Lagrange equation]
\label{discrete-euler}
Let $L$ be an admissible Lagrangian function. A discrete function $X\in C(\TT,\R)$ is a critical point of the discrete $\Delta$ Lagrangian functional associated to $L$ if and only if it satisfies 
\begin{equation}
\nabla \left( \frac{\partial L}{\partial v}(T,X,\Delta X)\right) = \frac{\partial L}{\partial x}(T,X,\Delta X) , \end{equation}
over $\TT^\pm$.
\end{theorem}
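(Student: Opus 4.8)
The plan is to compute $\mathcal{D}\mathcal{L}_{\Delta}(X)(H)$ explicitly and then apply the discrete integration by parts formula together with the discrete Dubois--Raymond lemma. First I would write, for $H\in C_0(\TT,\R)$,
\[
\mathcal{D}\mathcal{L}_{\Delta}(X)(H) = \left[J_{\Delta}\!\left(\frac{\partial L}{\partial x}(T,X,\Delta X)\star H + \frac{\partial L}{\partial v}(T,X,\Delta X)\star \Delta H\right)\right]_N,
\]
which follows by differentiating $\left[J_{\Delta}(L(T,X,\Delta X))\right]_N$ under the (finite) sum, using the linearity of $\Delta$, i.e. $\Delta(X+\epsilon H) = \Delta X + \epsilon \Delta H$, and the chain rule for the smooth function $L$. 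Since everything is a finite sum, passing the limit $\epsilon\to 0$ inside is immediate and no analytic subtlety arises here.

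Next I would treat the second term. Set $P = \frac{\partial L}{\partial v}(T,X,\Delta X) \in C(\TT_+,\R)$; we want to move the $\Delta$ off of $H$. The key is to use the discrete Leibniz rule (Theorem on discrete Leibniz rule) in the form $P\star \Delta H = \Delta(\rho(P)\star H) - \Delta(\rho(P))\star H$ over $\TT_+$, exactly as in the proof of the discrete integration by parts formula. Applying $\left[J_{\Delta}(\cdot)\right]_N$ and invoking the fundamental theorem of the discrete differential calculus (Theorem \ref{fondamental}) gives $\left[J_{\Delta}\circ\Delta(\rho(P)\star H)\right]_N = (\rho(P)\star H)_N - (\rho(P)\star H)_0 = 0$ because $H\in C_0(\TT,\R)$ (so $H_0 = H_N = 0$). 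Using $\Delta(\rho(P)) = \nabla(P)$ over $\TT^{\pm}$ (Lemma \ref{dual-derive}), the second term becomes $-\left[J_{\Delta}(\nabla(P)\star H)\right]_N$.

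Combining, the criticality condition $\mathcal{D}\mathcal{L}_{\Delta}(X)(H)=0$ for all $H\in C_0(\TT,\R)$ is equivalent to
\[
\left[J_{\Delta}\!\left(\left(\frac{\partial L}{\partial x}(T,X,\Delta X) - \nabla\!\left(\frac{\partial L}{\partial v}(T,X,\Delta X)\right)\right)\star H\right)\right]_N = 0
\]
for all such $H$. The discrete Dubois--Raymond lemma then forces the bracketed discrete function to vanish at the indices $k=1,\dots,N-1$, i.e. over $\TT^{\pm}$, which is precisely the stated discrete Euler--Lagrange equation; conversely, if the equation holds over $\TT^{\pm}$ the same computation shows $X$ is critical. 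The main obstacle, or rather the point requiring the most care, is bookkeeping the domains: $P=\partial L/\partial v(T,X,\Delta X)$ lives on $\TT_+$, $\rho(P)$ on $\TT_+$ as well after the appropriate identification, $\nabla(P)$ on $\TT_-$, and one must check that the indicial shifts in $\sigma,\rho,\Delta,\nabla$ line up so that the boundary term genuinely telescopes to $(\rho(P)\star H)_N - (\rho(P)\star H)_0$ and that the final identity is asserted over exactly $\TT^{\pm}$ and not a larger or smaller index set. Everything else is a direct transcription of the continuous computation, which is the whole point of the formalism.
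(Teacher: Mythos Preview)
Your proposal is correct and follows essentially the same route as the paper: compute the Fr\'echet derivative, apply the discrete integration by parts (which you unpack inline via the Leibniz rule, the fundamental theorem, and the duality $\Delta\circ\rho=\nabla$, exactly as the paper does in proving that formula), and conclude with the discrete Dubois--Raymond lemma. The only cosmetic difference is that the paper cites the integration by parts theorem as a black box whereas you reprove it on the spot; the content and the domain bookkeeping over $\TT^{\pm}$ are identical.
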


The previous formulation makes clear the relation between the usual Euler-Lagrange equation and the discrete one. In particular, one see the duality between the two operators $\Delta$ and $\nabla$ which induces a naturak mixing between the two discrete derivatives which is hide in the continuous case. 

\subsubsection{Comparison with Marsden-West definition}

One can compare this writing of the discrete Euler-Lagrange equation with the one obtained in (\cite{mars},p.363). Using the form (\ref{lagrange-marsden}), they have 
\begin{equation}
\label{euler-lagrange-marsden}
D_2 L_d (X_{k-1} ,X_k , h) +D_1 L_d (X_k , X_{k+1} ,h ) =0,
\end{equation}
for $k=1,\dots ,N-1$. The usual form of the Euler-Lagrange equation is completely lost and by the way the analogy between the two objects.\\

The same remark applies to the presentation made by E. Hairer, C. Lubich and G. Wanner in the book {\it geometric numerical integration} (see \cite{lubi}, formula (6.7) p.192-195).

\subsubsection{Proof of Theorem \ref{discrete-euler}}

The proof can be done in different ways. However, we want to keep a proof which is similar to the classical proof of the continuous Euler-Lagrange equation as exposed for example in (\cite{gelfand}, Theorem 1,p.15).\\

Using a Taylor expansion of $L$, we obtain 
\begin{equation}
\mathcal{D}\mathcal{L}_\Delta(X)(H)=\left [ J_{\Delta}\left(\frac{\partial L}{\partial v}(T,X,\Delta X)\star \Delta H+\frac{\partial L}{\partial x}(T,X,\Delta X)\star H\right) \right ]_N.
\end{equation}
As $H\in C_0(\TT,\R)$, we have using the discrete integration by parts formula
\begin{equation}
\mathcal{D}\mathcal{L}_\Delta(X)(H)=\left [ J_{\Delta} 
\left ( 
-\nabla \left(\frac{\partial L}{\partial v}(T,X,\Delta X)\right) \star H 
+\frac{\partial L}{\partial x}(T,X,\Delta X)\star H
\right) 
\right ]_N \ .
\end{equation}
Using the discrete Dubois-Raymond lemma, we deduce 
\begin{equation}
-\nabla \left[ \frac{\partial L}{\partial v}(T,X,\Delta X) \right ] +\frac{\partial L}{\partial x}(T,X,\Delta X) =0 \ \ \mbox{\rm over}\ \ \TT^{\pm}.
\end{equation}
This concludes the proof.

\part{Higher order discrete embeddings}

As it is well-known there exist other way to obtain better approximation of discrete integral for numerical computations such as the midpoint method, the trapezoidal method and the Simpson method. Our choice of discrete integral correspond to the left method for the $\Delta$ integral and the right method for the $\nabla$ integral. In our construction of finite difference embedding, the choice of discrete integral is explicitly dependent of the choice of the discretisation of functions and the discrete derivative in order to preserve the fundamental theorem of differential calculus. In what follow we briefly present two higher interpolation method which leads to the midpoint method, the trapezoidal method and the Simpson method for the forward derivative only. The backward derivative will be straightforward obtained.

\section{Quadratic embedding : Trapezoidal and Midpoint methods}

In this Section, we obtain the classical one step Trapezoidal (resp. Midpoint) numerical scheme (see \cite{dema}, III.1.2) as respectively the differential (resp. integral) discrete embedding of the quadratic discrete embedding that we define in the next Section.

\subsection{Quadratic discrete embedding}

We follow the general strategy of discrete embeddings. We define a quadratic interpolation map which is used to obtain a discrete derivative denoted by $\Delta_2$. We then construct the corresponding discrete integral denoted by $J_{\Delta_2}$. 

\subsubsection{Quadratic interpolation mapping}

We suppose $N=2d,d \ge 1, d \in \N$. We define a {\it quadratic interpolation mapping} denoted by $\iota_2 : C(\TT ,\R ) \rightarrow P_2 ([a,b],\R )$ using Lagrange polynomial of degree $2$. Precisely, we have 
\begin{equation}
\iota_2 (F) (t) =\di\sum_{i=0}^{d-1} P_{L,2}^{F_{2i} ,F_{2i+1} ,F_{2i+2}} (t) 1_{[t_{2i} ,t_{2i+2}[} . 
\end{equation}

\begin{figure}[ht!]
\centering
\includegraphics[width=0.4\textwidth,clip]{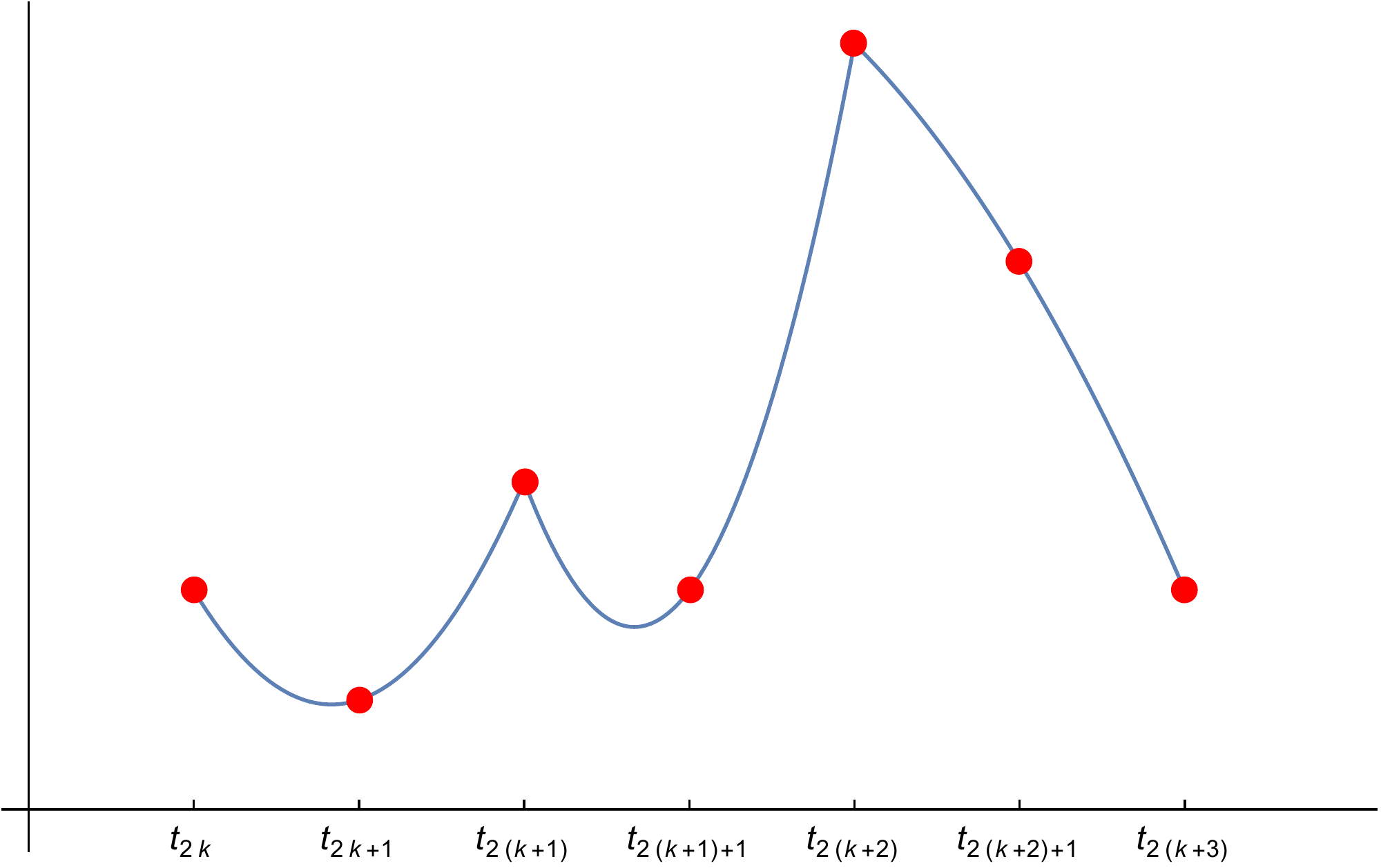}      
\caption{Quadratic interpolation mapping}
\label{p2}
\end{figure}

\subsubsection{Quadratic discrete derivative and integral}

We follow the strategy used in Part I. We denote by $\Delta_2$ the discrete derivative defined by 
\begin{equation}
\Delta_2 =\pi \circ d^+ \circ \iota_2 .
\end{equation}

An explicit form is given by
\begin{equation}
\left .
\begin{array}{lll}
\left [ \Delta_2 (F)\right ]_{2k} & = & \di\frac{2(F_{2k+1}-F_{2k})}{h}-\frac{(F_{2k+2}-F_{2k})}{2h}, \\
\left [ \Delta_2 (F) \right ]_{2k+1} & = & \di\frac{(F_{2k+2}-F_{2k})}{2h},
\end{array}
\right .
\end{equation}
for $k=0,...,d-1$.

\begin{figure}[ht!]
\centering
\includegraphics[width=0.4\textwidth,clip]{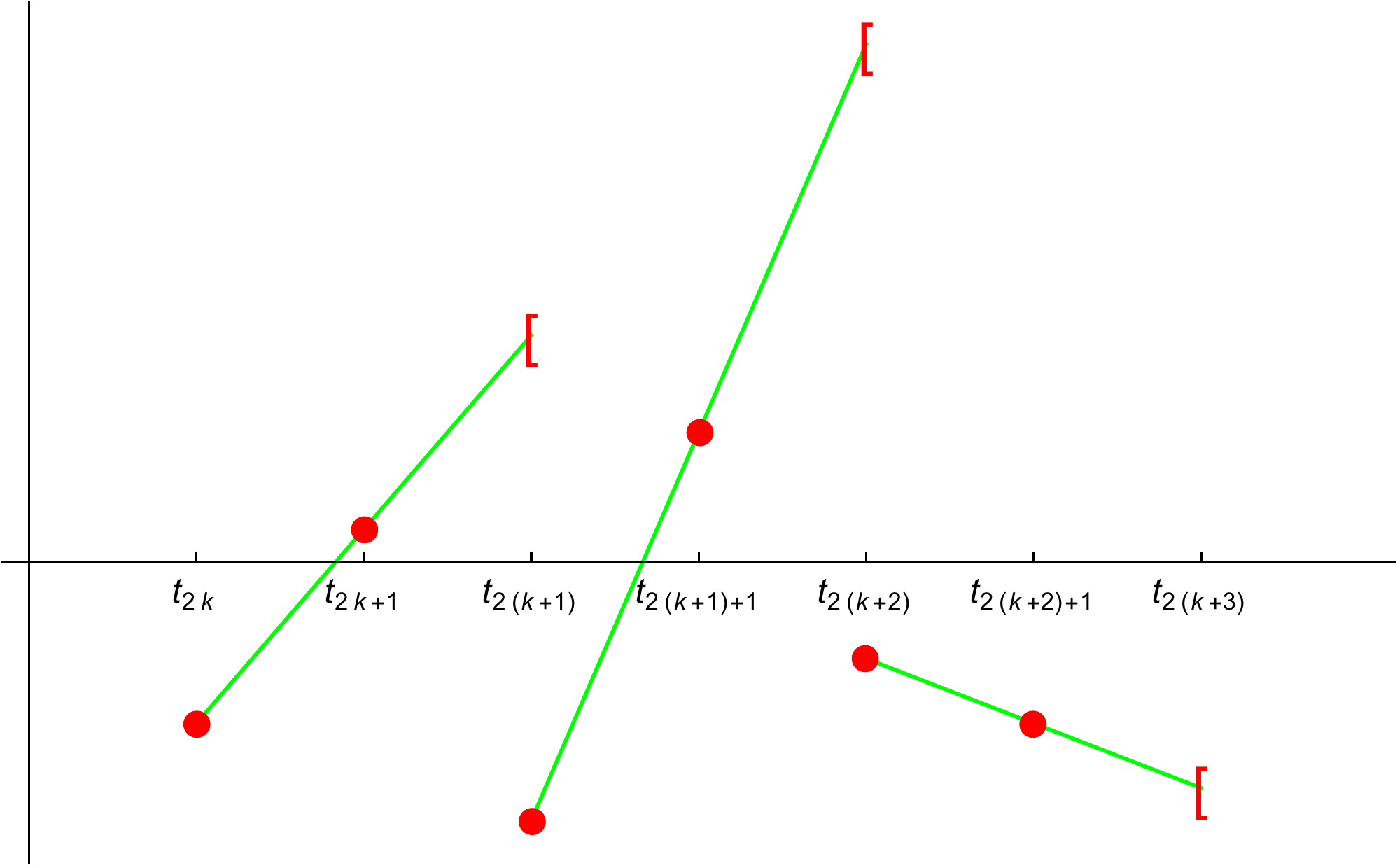}      
\caption{Quadratic forward derivative}
\label{dp2}
\end{figure}

In order to define the discrete integral associated to $\Delta_2$, we must find an interpolation mapping denoted by $\iota_1^+$ in the following, such that $\iota_1^+ \circ \pi =\mbox{\rm Id}$ on $P_1^+ ([a,b])$. We have 
\begin{equation}
\iota_1^+(F)(t)=\di\sum_{k=0}^{d-1} \left[ F_{2k} + \frac{F_{2k+1}-F_{2k}}{h}(t-t_{2k})\right ] 1_{[t_2k,t_{2k+2}[}(t).
\end{equation}
The discrete integral is then defined by 
\begin{equation}
J_{\Delta_2} =\pi \circ \int_a^t \circ \iota_1^+ ,
\end{equation}
and satisfies $J_{\Delta_2} (\Delta_2 (F)) =F-\mathbb{F}_0$. Indeed, we have $J_{\Delta_2} (\Delta_2 (F)) =
\pi \circ \int_a^t \circ \tau_1^+ \circ \pi \circ d^+ \circ \iota_2 (F)$. As $d^+ \circ \iota_2 (F) \in P_1^+ ([a,b])$ and $\tau_1^+ \circ \pi =\mbox{rm Id}$ on $P_1^+ ([a,b])$, this equality reduces to $J_{\Delta_2} (\Delta_2 (F)) =
\pi \circ \int_a^t \circ d^+ \circ \iota_2 (F) =\pi \left [ \iota_2 (F) -F_0 \right ] =F -\mathbb{F}_0$.\\

An explicit form for $J_{\Delta_2}$ is given by 
\begin{equation}
\left [ J_{\Delta} (F) \right ] _{2k} =\sum_{q=0}^{k-1} 2h F_{2q+1},
\end{equation}
for $k=1,...,d$ with $[ J_{\Delta_2} (F)] _0 = 0$ and 
\begin{equation}
\left [ J_{\Delta_2} (F) \right ] _{2k+1} = h\frac{F_{2k}+F_{2k+1}}{2} + \sum_{q=0}^{k-1}2h F_{2q+1},
\end{equation}
for $k=1,...,d-1$ with $[J_{\Delta_2} (F)] _1 = h\frac{F_{0}+F_{1}}{2}$.

\begin{figure}[ht!]
\centering
\includegraphics[width=0.4\textwidth,clip]{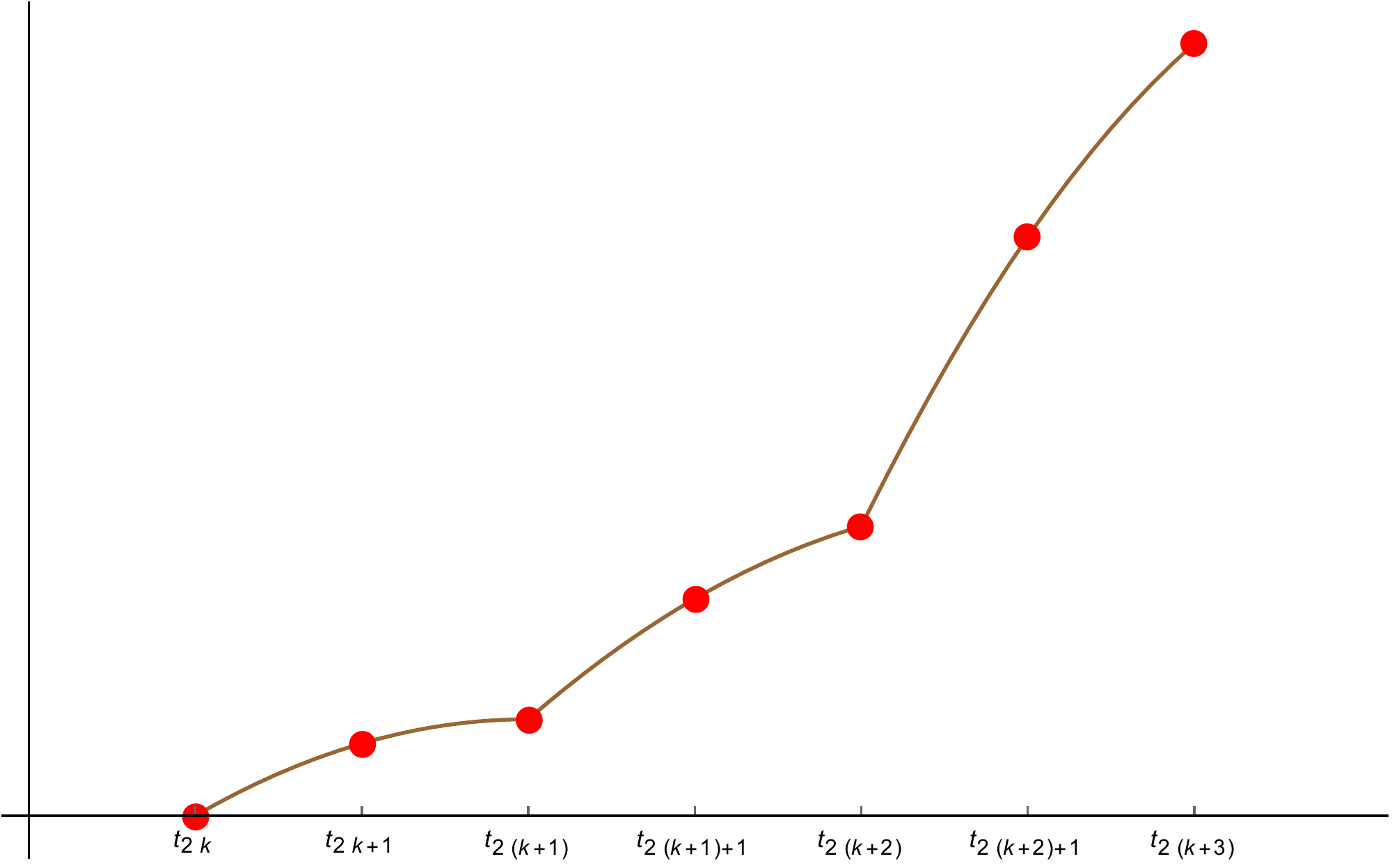}      
\caption{Quadratic discrete integral}
\label{intp2}
\end{figure}

\subsection{Discrete differential and integral embedding}

Applying the previous discrete embedding, we obtain the following result :

\begin{theorem}
\label{delta2}
The $\Delta_2$ differential embedding (resp. integral embedding) correspond to the one step Trapezoidal (resp. Midpoint) scheme. 
\end{theorem}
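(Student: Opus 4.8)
The plan is to carry out the two computations separately, since Theorem~\ref{delta2} really contains two independent claims: the $\Delta_2$ differential embedding yields the Trapezoidal scheme, and the $\Delta_2$ integral embedding yields the Midpoint scheme. In both cases the strategy mirrors exactly the Euler case treated in Section on discrete integral embedding: write down the abstract discrete relation, then substitute the explicit formulas for $\Delta_2$ and $J_{\Delta_2}$ already derived above, and finally identify the resulting recurrence with the classical scheme as recorded in (\cite{dema}, III.1.2).

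For the differential embedding, I would start from the discrete relation $\Delta_2 X = f(T,X)$ and use the two-line explicit form of $\Delta_2$: the even-index equation gives $\frac{2(X_{2k+1}-X_{2k})}{h}-\frac{X_{2k+2}-X_{2k}}{2h}=f(t_{2k},X_{2k})$ and the odd-index equation gives $\frac{X_{2k+2}-X_{2k}}{2h}=f(t_{2k+1},X_{2k+1})$. Substituting the second relation into the first eliminates the intermediate slope, and after rearranging one obtains $X_{2k+1}-X_{2k}=\frac{h}{2}\bigl(f(t_{2k},X_{2k})+f(t_{2k+1},X_{2k+1})\bigr)$ on each subinterval, i.e. precisely the one step Trapezoidal rule. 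I would present this as a short chain of algebraic manipulations, noting that the relabelling of the half-grid indices matches the step size of the Trapezoidal scheme.

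For the integral embedding, I would start from $X=\mathbb{X}_0+J_{\Delta_2}(f(T,X))$ and insert the explicit formula for $J_{\Delta_2}$. Evaluating at consecutive odd and even nodes and subtracting telescopes the sum $\sum_{q=0}^{k-1}2hF_{2q+1}$, leaving, on each elementary block, $X_{2k+2}-X_{2k}=2h\, f(t_{2k+1},X_{2k+1})$, which is the Midpoint scheme with step $2h$ (the midpoint $t_{2k+1}$ of $[t_{2k},t_{2k+2}]$ carrying the evaluation of $f$). As in the first-order case, coherence is not expected here: the differential and integral embeddings of $\Delta_2$ genuinely produce different schemes, which is the whole point of the statement. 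I would close with a remark to that effect.

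The routine part is the index bookkeeping on the half-integer grid; the only genuine obstacle is making sure the statement of the classical Trapezoidal and Midpoint schemes that one compares against uses the same node convention — in particular that ``one step'' here refers to a step of size $2h$ of the coarse grid $t_0,t_2,t_4,\dots$, with the odd nodes $t_{2k+1}$ playing the role of the implicitly defined intermediate stage. Once the dictionary between our $\TT$ and the coarse grid is fixed, both identifications are immediate from the explicit forms already established, so I would keep the proof to a few lines per case rather than belaboring the arithmetic.
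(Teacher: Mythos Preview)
Your proposal is correct and follows essentially the same route as the paper's own proof in Appendix~A: write out $\Delta_2 X = f(T,X)$ and $X = \mathbb{X}_0 + J_{\Delta_2}(f(T,X))$ using the explicit two-line formulas for $\Delta_2$ and $J_{\Delta_2}$ already established, then rearrange each block of equations (adding the even and odd differential relations, telescoping the even integral relations) to recognise the Trapezoidal and Midpoint recurrences. The only cosmetic difference is that the paper displays both embeddings in full before drawing the two consequences together, whereas you handle the two cases sequentially; the algebra is identical.
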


A direct consequence is the following corollary :

\begin{corollary}
The $\Delta_2$ discrete embedding is not coherent.
\end{corollary}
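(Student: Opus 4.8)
The plan is to deduce the statement from Theorem \ref{delta2}. Recall that in this formalism the $\Delta_2$ discrete embedding is \emph{coherent} exactly when, for every admissible right-hand side $f$ and every choice of initial data, the discrete trajectory defined by the $\Delta_2$ differential embedding of $dx/dt=f(t,x)$ coincides with the one defined by its $\Delta_2$ integral embedding. By Theorem \ref{delta2} the first of these discrete realisations is the one step Trapezoidal scheme and the second is the one step Midpoint scheme. Hence it is enough to exhibit a single ordinary differential equation, together with a choice of $X_0$, for which the Trapezoidal and the Midpoint schemes produce different discrete solutions.

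First I would reduce to a quadrature comparison by taking $f$ independent of the state, $f(t,x)=g(t)$, which makes both schemes explicit: over the macro-interval $[t_{2k},t_{2k+2}]$ (of length $2h$, with midpoint $t_{2k+1}$) the Trapezoidal embedding adds $h\bigl(g(t_{2k})+g(t_{2k+2})\bigr)$ to $X_{2k}$, while the Midpoint embedding adds $2h\,g(t_{2k+1})$. The only point of care is that $g$ must not be affine, since for an affine integrand both quadratures are exact and the comparison is vacuous; so choose $g(t)=t^2$. Taking $a=0$ and $X_0=0$, the value at $t_2=2h$ is $h\bigl(g(0)+g(2h)\bigr)=4h^3$ for the differential (Trapezoidal) embedding and $2h\,g(h)=2h^3$ for the integral (Midpoint) embedding.

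Since $h=(b-a)/N>0$, we have $4h^3\neq 2h^3$, so the two discrete solutions of $dx/dt=t^2$ with $X_0=0$ already disagree at $t_2$; therefore the $\Delta_2$ differential and integral embeddings do not coincide, i.e. the $\Delta_2$ discrete embedding is not coherent. The only non-routine ingredient in this argument is Theorem \ref{delta2} itself -- the identification of the two $\Delta_2$ discrete realisations with the classical Trapezoidal and Midpoint one step schemes, and in particular the bookkeeping that eliminates the odd-node unknowns $X_{2k+1}$ and checks which recursion survives on the even nodes; granting that, the failure of coherence is the one-line computation above, and any non-affine choice of $f$ (for instance $f(t,x)=x^2$, at the cost of an implicit scalar equation per step) would serve equally well.
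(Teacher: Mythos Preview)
Your argument is in the same spirit as the paper's: the paper offers no separate proof of this corollary at all, simply recording it as an immediate consequence of Theorem~\ref{delta2} (differential embedding $=$ Trapezoidal, integral embedding $=$ Midpoint, and these are distinct one-step methods). Your contribution is to make the ``distinct'' part explicit via a concrete witness, which is a reasonable elaboration.

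One caution about the details of your computation. You apply the Trapezoidal rule on the macro-interval $[t_{2k},t_{2k+2}]$ with endpoint values $g(t_{2k})$ and $g(t_{2k+2})$, obtaining $X_2=h\bigl(g(0)+g(2h)\bigr)$. But if you look at Appendix~A, the Trapezoidal relation produced by the $\Delta_2$ differential embedding is actually
\[
X_{2k+1}=X_{2k}+\tfrac{h}{2}\bigl(f(T_{2k},X_{2k})+f(T_{2k+1},X_{2k+1})\bigr),
\]
i.e.\ a step of size $h$ from $t_{2k}$ to $t_{2k+1}$, while the Midpoint relation $X_{2k+2}=X_{2k}+2h\,f(T_{2k+1},X_{2k+1})$ governs the even nodes. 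So the ``Trapezoidal'' and ``Midpoint'' in Theorem~\ref{delta2} are not both schemes for the even-node subsequence over macro-intervals, as your example tacitly assumes; they live on different subgrids. Your numerical comparison $4h^3$ versus $2h^3$ therefore does not literally compare the two $\Delta_2$ embeddings at the same node. Since you explicitly grant Theorem~\ref{delta2} and treat the incoherence as following from the mere fact that the two named schemes differ, your conclusion matches the paper's; but if you want the example to be a genuine witness, you should recompute it using the exact recursions displayed in Appendix~A rather than the textbook macro-interval versions.
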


It is interesting to notice that the $\Delta$ discrete embedding was coherent with respect to the differential/integral embedding only breaking when one considers variational discrete embedding. In this case, non-coherence already break-up at the first level.

\section{Cubic discrete embedding : the forward Simpson numerical scheme}

In this Section, we consider a cubic discrete embedding. The main point is that we recover coherence between the differential/integral embedding and obtain the {\it forward Simpson numerical scheme} (see \cite{dema},III.1.2).

\subsection{Cubic discrete embedding}

\subsubsection{Cubic interpolation mapping}

We suppose $N=3d,d \ge 1, d \in \N$. We define a {\it cubic interpolation mapping} denoted by $\iota_3 : C(\TT ,\R ) \rightarrow P_3 ([a,b],\R )$ using Lagrange polynomial of degree $3$. Precisely, we have 
\begin{equation}
\iota_3 (F) (t) =\di\sum_{i=0}^{d-1} P_{L,3}^{F_{3i} ,F_{3i+1} ,F_{3i+2} ,F_{3i+3} } (t) 1_{[t_{3i} ,t_{3i+3}[} . 
\end{equation}

\begin{figure}[ht!]
\centering
\includegraphics[width=0.4\textwidth,clip]{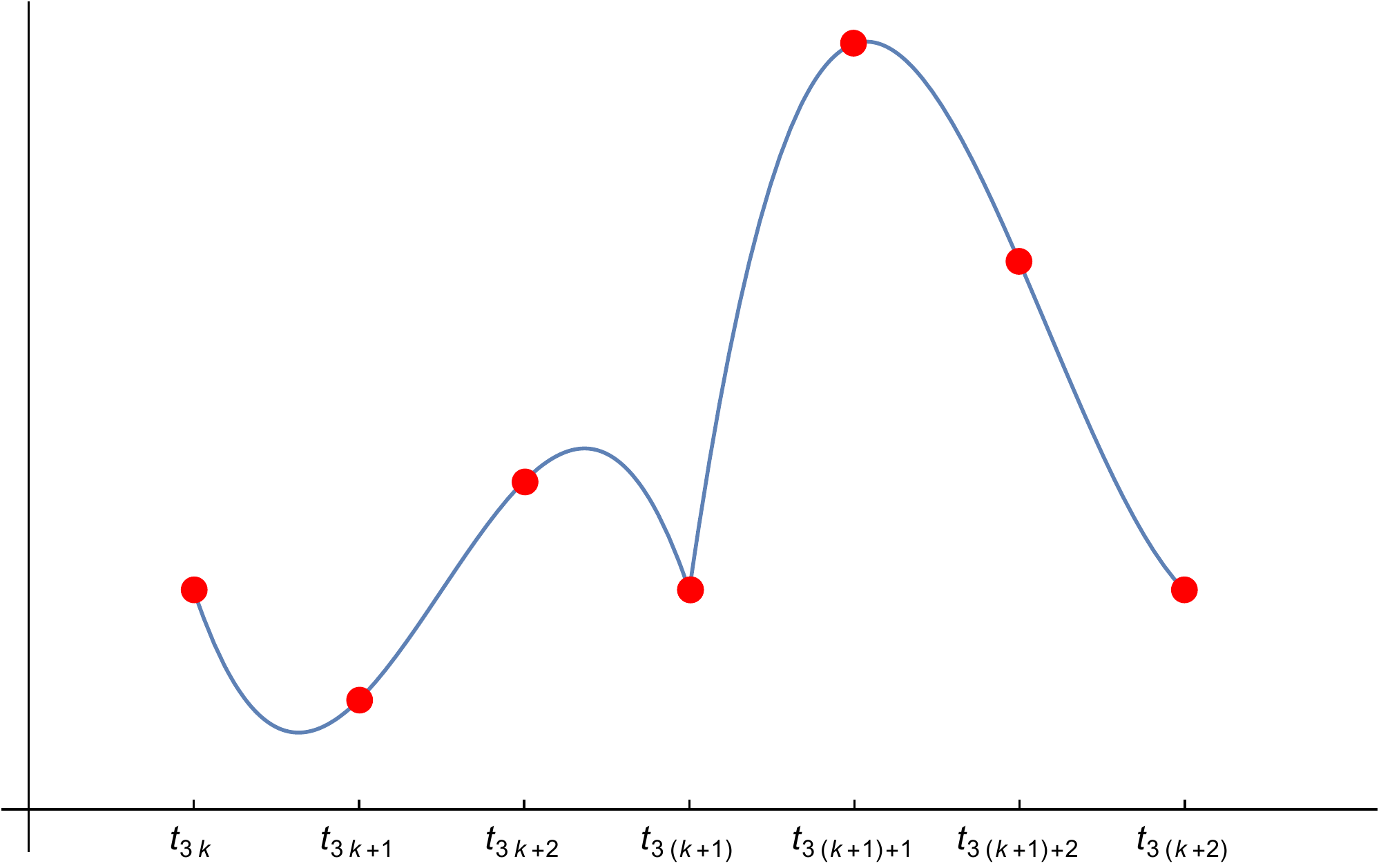}      
\caption{Cubic interpolation mapping}
\label{p3}
\end{figure}

\subsubsection{Cubic discrete derivative and integral}

We follow the strategy used in Part I. We denote by $\Delta_3$ the discrete derivative defined by 
\begin{equation}
\Delta_3 =\pi \circ d^+ \circ \iota_3 .
\end{equation}

An explicit form for $\Delta_3$ is given by 
\begin{equation}
\left .
\begin{array}{lll}
\left [ \Delta_3 (F)\right ]_{3k} & = & -3\di\frac{(F_{3k+2}-2F_{3k+1}+F_{3k})}{2h}-\frac{(F_{3k+3}-F_{3k})}{3h}, \\
\left [\Delta_3 (F) \right ]_{3k+1}& = & -\di\frac{(F_{3k+1}-2F_{3k+2}+F_{3k})}{2h}-\frac{(F_{3k+3}-F_{3k})}{6h}, \\
\left [ \Delta_3 (F) \right ]_{3k+2}& = & \di\frac{(F_{3k+3}-2F_{3k+1}+F_{3k+2})}{2h}-\frac{(F_{3k+3}-F_{3k})}{6h},
\end{array}
\right .
\end{equation}
for $k=0,...,d-1$.

\begin{figure}[ht!]
\centering
\includegraphics[width=0.4\textwidth,clip]{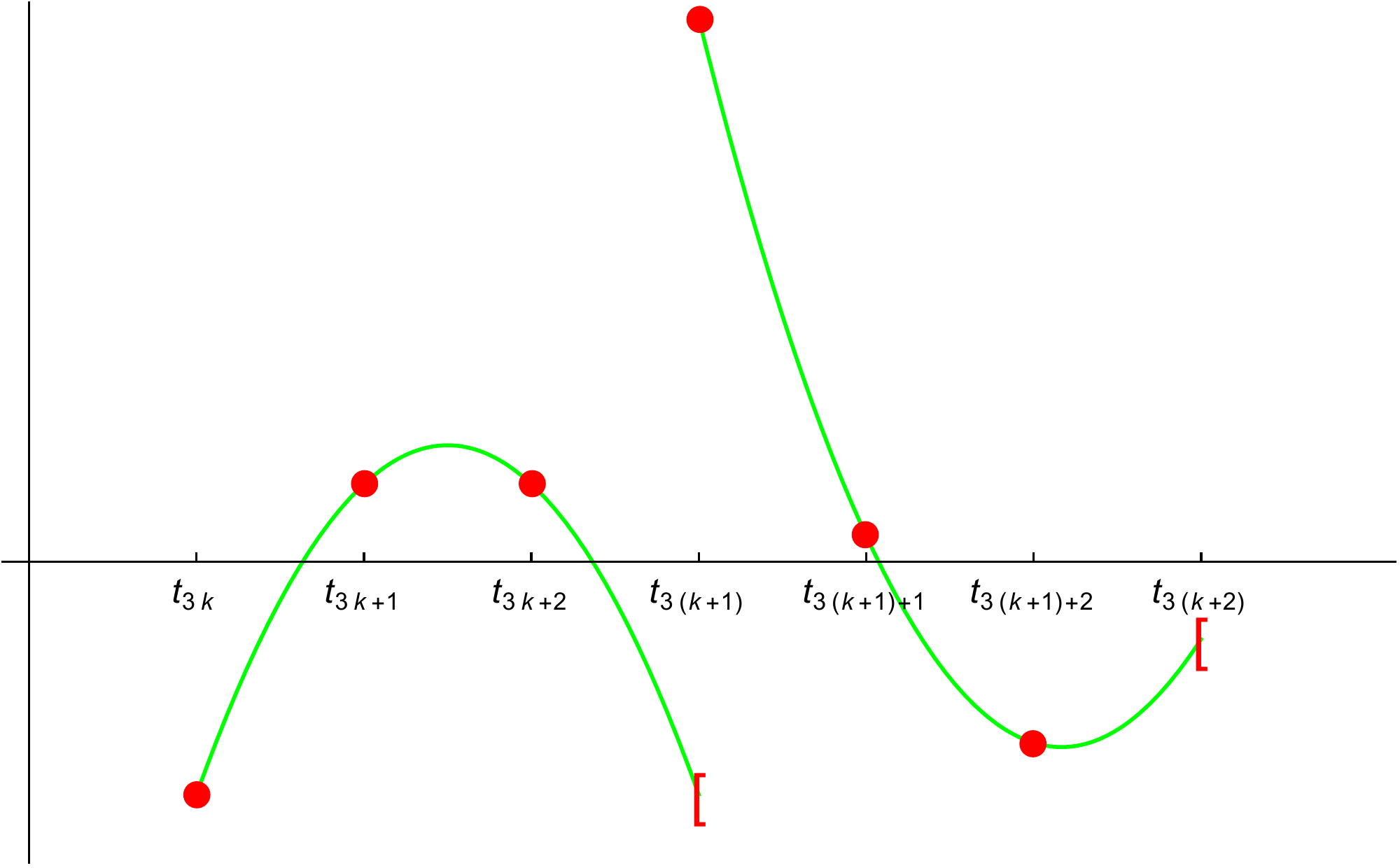}      
\caption{Cubic forward derivative}
\label{dp3}
\end{figure}

In order to define the discrete integral associated to $\Delta_3$, we must find an interpolation mapping denoted by $\iota_2^+$ in the following, such that $\iota_2^+ \circ \pi =\mbox{\rm Id}$ on $P_2^+ ([a,b])$. We have 
\begin{equation}
\iota_2^+(F)(t)=\sum_{k=0}^{d-1} 
\left [ 
\begin{array}{l}
\di \frac{F_{3k} (t-t_{3k+1}) (t-t_{3k+2})}{(t_{3k}-t_{3k+1}) (t_{3k}-t_{3k+2})}-\frac{ F_{3k+1} (t-t_{3k}) (t-t_{3k+2})}{(t_{3k}-t_{3k+1}) (t_{3k+1}-t_{3k+2})} \\
+\di\frac{F_{3k+2} (t-t_{3k}) (t-t_{3k+1})}{(t_{3k+2}-t_{3k}) (t_{3k+2}-t_{3k+1})} 
\end{array}
\right ] 
1_{[t_{3k},t_{3k+3}[}(t).
\end{equation}

The discrete integral is then defined by 
\begin{equation}
J_{\Delta_3} =\pi \circ \int_a^t \circ \iota_2^+ ,
\end{equation}
and satisfies $J_{\Delta_3} (\Delta_3 (F)) =F-\mathbb{F}_0$. Indeed, we have $J_{\Delta_3} (\Delta_3 (F)) =
\pi \circ \int_a^t \circ \iota_2^+ \circ \pi \circ d^+ \circ \iota_3 (F)$. As $d^+ \circ \iota_3 (F) \in P_2^+ ([a,b])$ and $\iota_1^+ \circ \pi =\mbox{rm Id}$ on $P_2^+ ([a,b])$, this equality reduces to $J_{\Delta_2} (\Delta_2 (F)) =
\pi \circ \int_a^t \circ d^+ \circ \iota_3 (F) =\pi \left [ \iota_3^+ (F) -F_0 \right ] =F -\mathbb{F}_0$.\\

An explicit form for $J_{\Delta_3}$ is given by

\begin{equation}
\left [ J_{\Delta_3} (F) \right ]_{3k} = \sum_{q=0}^{k-1}3h \frac{\left(F_{3k} + 3F_{3k+2}\right)}{4},
\end{equation}
for $k=1,...,d$ and by
\begin{align*}
\left [ J_{\Delta_3} (F) \right ]_{3k+1} =\sum_{q=0}^{k-1}\frac{h}{12}\left(5 F_{3k} + 8 F_{3k+1}-F_{3k+2}\right) \\
\left [ J_{\Delta_3} (F) \right ]_{3k+2} = \sum_{q=0}^{k-1}\frac{h}{3}\left(F_{3k} + 4F_{3k+1}+F_{3k+2}\right),
\end{align*}
for $k=1,...,d-1$.

\begin{figure}[ht!]
\centering
\includegraphics[width=0.4\textwidth,clip]{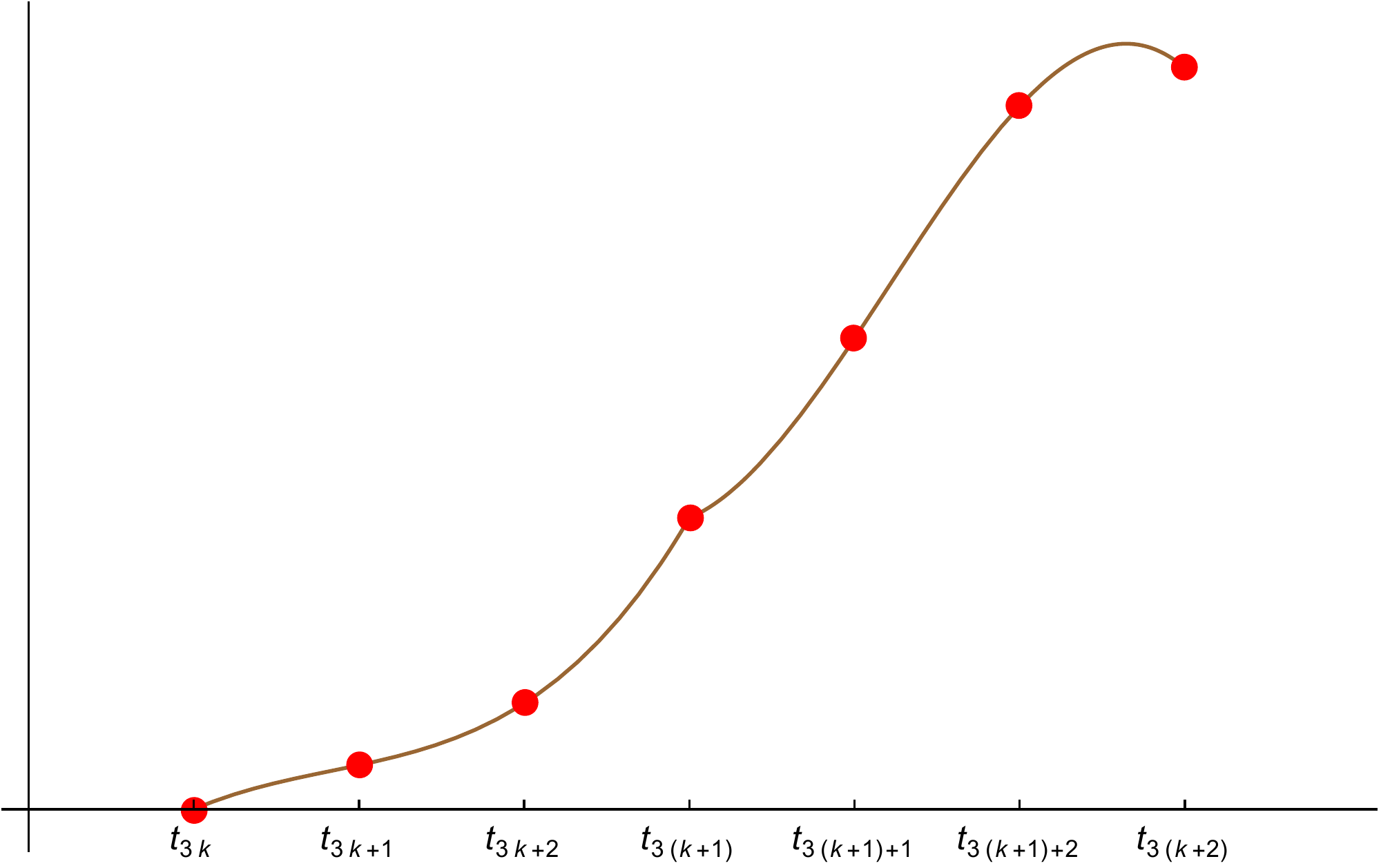}      
\caption{Cubic discrete integral}
\label{intp3}
\end{figure}

\subsection{Discrete differential and integral embedding}

We have the following Theorem :

\begin{theorem}
\label{delta3}
The $\Delta_3$ differential and integral discrete embedding corresponds to the one forward Simpson numerical scheme.
\end{theorem}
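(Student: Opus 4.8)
The plan is to follow exactly the same scheme that was used for Theorems~\ref{delta2} and for the $\Delta$ embedding in Part~II, namely to unwind the definitions of the $\Delta_3$ differential embedding and of the $J_{\Delta_3}$ integral embedding of the equation $dx/dt = f(t,x)$ and to check that both reduce to the known three-step Simpson formula. First I would write down the differential embedding $\Delta_3 X = f(T,X)$, split it into the three families of nodes $3k$, $3k+1$, $3k+2$, and use the explicit form of $\Delta_3$ given above to get three relations among $X_{3k}, X_{3k+1}, X_{3k+2}, X_{3k+3}$ and the corresponding values $f(T_j,X_j)$. The key algebraic step is to take a suitable linear combination of these three relations so that the intermediate unknowns $X_{3k+1}$, $X_{3k+2}$ are eliminated and one is left with $X_{3k+3}-X_{3k} = \tfrac{3h}{?}\bigl( f_{3k} + \cdots + f_{3k+3}\bigr)$ matching the Simpson quadrature weights $\tfrac{1}{8}(1,3,3,1)$ (the $3/8$-rule) on the subinterval $[t_{3k},t_{3k+3}]$.

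For the integral embedding I would start from $X = \mathbb{X}_0 + J_{\Delta_3}(f(T,X))$ and use the explicit formulas for $J_{\Delta_3}$ displayed just above (the three cases $3k$, $3k+1$, $3k+2$). Reading off the successive differences $[J_{\Delta_3}(G)]_{3k+3}-[J_{\Delta_3}(G)]_{3k}$ from those formulas gives directly a closed quadrature rule on each block $[t_{3k},t_{3k+3}]$; substituting $G = f(T,X)$ then produces the one-step forward Simpson scheme. The final point of the theorem — the claimed coherence — is obtained by comparing the two outputs: one must observe that the linear combination extracted in the differential step and the block-difference extracted in the integral step coincide, which is exactly the statement $\Delta_3 \circ J_{\Delta_3} = \Id$ and $J_{\Delta_3}\circ \Delta_3 = \Id - \mathbb{F}_0$ already proved above (the fundamental theorem of the discrete differential calculus at the cubic level). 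Thus coherence is not an extra computation but a consequence of that identity together with the fact that $f(T,X)$ occupies the role of both $\Delta_3 X$ and the integrand.

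The main obstacle is the purely computational one of carrying out the elimination in the differential case so that the weights come out exactly as the Simpson $3/8$ weights; the three rows of $\Delta_3$ are not symmetric, so one has to be careful with which combination of the rows at $3k$, $3k+1$, $3k+2$ is used and to track the telescoping of the $(F_{3k+3}-F_{3k})/h$ terms across successive blocks. Once the quadrature weights are confirmed to agree with the tabulated Simpson coefficients in \cite{dema}, III.1.2, and the block differences of $J_{\Delta_3}$ are checked against the same table, the identification of both embeddings with the forward Simpson scheme, and hence their mutual coherence, follows, which concludes the proof. \qed
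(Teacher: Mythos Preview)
Your overall plan --- write out both the $\Delta_3$ differential embedding and the $J_{\Delta_3}$ integral embedding of $dx/dt=f(t,x)$ in explicit components, extract from each a quadrature-type relation, and identify it with the Simpson scheme quoted in \cite{dema} --- is exactly the route the paper takes in the appendix. Your remark that coherence is a formal consequence of the cubic fundamental theorem $J_{\Delta_3}\circ\Delta_3=\Id-\mathbb{F}_0$ (so that the two embeddings are automatically equivalent once one of them is identified) is correct and in fact cleaner than what the paper writes, which simply lists both systems and then reads off the common scheme.

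There is, however, a concrete mis-identification in your plan. You aim for the Simpson $3/8$ rule with weights $(1,3,3,1)$ on the block $[t_{3k},t_{3k+3}]$, via the difference $[J_{\Delta_3}(G)]_{3k+3}-[J_{\Delta_3}(G)]_{3k}$ and, on the differential side, via elimination of $X_{3k+1},X_{3k+2}$ in favour of $X_{3k+3}-X_{3k}$. This will not come out. The discrete integral $J_{\Delta_3}$ is built from the \emph{quadratic} interpolant $\iota_2^+$, which uses only the three nodes $t_{3k},t_{3k+1},t_{3k+2}$ on each block; integrating that quadratic over $[t_{3k},t_{3k+3}]$ gives
\[
[J_{\Delta_3}(F)]_{3(k+1)}-[J_{\Delta_3}(F)]_{3k}=\frac{3h}{4}\bigl(F_{3k}+3F_{3k+2}\bigr),
\]
not the $3/8$ weights. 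The node at which the classical Simpson formula actually appears is $t_{3k+2}$: from the explicit expression of $J_{\Delta_3}$ one gets
\[
X_{3k+2}-X_{3k}=\frac{h}{3}\bigl(f(T_{3k},X_{3k})+4f(T_{3k+1},X_{3k+1})+f(T_{3k+2},X_{3k+2})\bigr),
\]
i.e.\ the Simpson $1/3$ rule, and this is what the paper calls the ``forward Simpson scheme''. So in your plan you should (i) replace the $3/8$ target by the $1/3$ weights $(1,4,1)$, and (ii) on both the differential and the integral side look at the relation linking $X_{3k}$ and $X_{3k+2}$ rather than $X_{3k}$ and $X_{3k+3}$. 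With that correction the elimination you describe goes through and matches the paper's computation.
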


A direct consequence of this Theorem is :

\begin{corollary}
The $\Delta_3$ discrete embedding is coherent.
\end{corollary}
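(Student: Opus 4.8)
The plan is to verify the two claims—coherence of the cubic embedding and identification with the forward Simpson scheme—by direct comparison of the explicit formulas already established for $\Delta_3$ and $J_{\Delta_3}$ with the classical one-step Simpson rule. The argument runs in parallel to the proofs of Theorem \ref{delta2} and of the $\Delta$-case, so the structure is: (i) write out the discrete differential embedding $\Delta_3 X = f(T,X)$ block by block over each subinterval $[t_{3k},t_{3k+3}]$, using the three explicit expressions for $[\Delta_3(F)]_{3k}$, $[\Delta_3(F)]_{3k+1}$, $[\Delta_3(F)]_{3k+2}$; (ii) write out the discrete integral embedding $X = \mathbb{X}_0 + J_{\Delta_3}(f(T,X))$ block by block using the explicit expressions for $[J_{\Delta_3}(F)]_{3k}$, $[J_{\Delta_3}(F)]_{3k+1}$, $[J_{\Delta_3}(F)]_{3k+2}$; and (iii) show that after elementary algebraic manipulation both systems reduce to the same relation between $X_{3k}$ and $X_{3k+3}$, namely the Simpson quadrature $X_{3k+3} = X_{3k} + \tfrac{h}{3}\bigl(f(T_{3k},X_{3k}) + 4 f(T_{3k+1},X_{3k+1}) + f(T_{3k+2},X_{3k+2})\bigr)$, together with the two intermediate-node relations that fix $X_{3k+1}$ and $X_{3k+2}$.

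First I would treat the integral embedding, since it is the more transparent: evaluating $[J_{\Delta_3}(f(T,X))]_{3k}$ telescopes, and matching $X_{3k+3} - X_{3k}$ against the increment of the explicit formula for $[J_{\Delta_3}]$ on one period directly yields the Simpson weight $\tfrac{h}{3}(1,4,1)$; the half-step values $X_{3k+1}$ and $X_{3k+2}$ come out of the $[J_{\Delta_3}]_{3k+1}$ and $[J_{\Delta_3}]_{3k+2}$ formulas. This establishes that the integral embedding is exactly the forward Simpson scheme. Next I would turn to the differential embedding: substituting $f(T,X)$ for $\Delta_3 X$ in each of the three block-equations gives a linear system in $X_{3k+1},X_{3k+2},X_{3k+3}$ (with $X_{3k}$ known), and I would solve it—or, more cheaply, add the three equations with suitable coefficients chosen so the telescoping second-difference terms cancel—to recover precisely the Simpson relation for $X_{3k+3}-X_{3k}$. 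Since by Theorem \ref{fondamental}-type reasoning $\Delta_3 \circ J_{\Delta_3} = \Id$ (the identity $J_{\Delta_3}(\Delta_3 F) = F - \mathbb{F}_0$ is proved in the text, and the reverse direction is analogous via $\iota_2^+ \circ \pi = \Id$ on $P_2^+$), the differential and integral embeddings are inverse operations applied to the same data, which already forces them to define the same scheme; the explicit computation just identifies that common scheme as Simpson's.

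The corollary is then immediate: coherence, as defined in the text, is exactly the statement that the differential and integral discrete embeddings of the ODE \eqref{eqdiff} yield the same discrete realisation, and step (iii) above shows both are the forward Simpson scheme. One should remark—echoing the comment after the $\Delta_2$ corollary—that this is in contrast with $\Delta_2$, where the differential embedding gave the Trapezoidal and the integral embedding the Midpoint rule; the reason coherence is restored at the cubic level is that $d^+ \circ \iota_3(F) \in P_2^+([a,b])$ and the chosen reconstruction map $\iota_2^+$ satisfies $\iota_2^+ \circ \pi = \Id$ on $P_2^+$, so the fundamental theorem of the discrete differential calculus holds exactly for the pair $(\Delta_3, J_{\Delta_3})$, which is precisely the algebraic mechanism that makes the two embeddings coincide.

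The main obstacle I expect is purely bookkeeping rather than conceptual: one must be careful with the block index ranges ($k$ running $0,\dots,d-1$, with separate initial values at nodes $0$ and $1$) and with the fact that the explicit formula for $[J_{\Delta_3}(F)]_{3k+1}$ and $[J_{\Delta_3}(F)]_{3k+2}$ as printed contains only the telescoped sum and must be augmented by the within-block Lagrange-quadrature contribution (as in the $\Delta_2$ case, where $[J_{\Delta_2}(F)]_{2k+1}$ carries an extra $h(F_{2k}+F_{2k+1})/2$ term). Getting these boundary/initial terms right is what makes the two one-period relations line up on the nose; once they do, the algebra collapses to Simpson's rule and both assertions follow.
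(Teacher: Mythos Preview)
Your proposal is correct and, in its computational portion, follows exactly the paper's approach: the paper treats the corollary as an immediate consequence of Theorem~\ref{delta3}, and that theorem is proved in the appendix by precisely the block-by-block explicit computation you describe (write out $\Delta_3 X=f(T,X)$ and $X=\mathbb{X}_0+J_{\Delta_3}(f(T,X))$, then reduce both to the Simpson relation).

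Where you go further than the paper is in your observation that coherence itself follows abstractly from the discrete fundamental theorem for the pair $(\Delta_3,J_{\Delta_3})$: since $J_{\Delta_3}\circ\Delta_3(F)=F-\mathbb{F}_0$ (shown in the text) and $\Delta_3\circ J_{\Delta_3}=\Id$ (your argument via $\iota_3\circ\pi=\Id$ on $P_3$ applied to $\int_a^t\iota_2^+(F)\in P_3$ is correct), the differential and integral embeddings are equivalent without any computation, and the explicit calculation is needed only to name the resulting scheme as Simpson's. The paper does not make this distinction and establishes coherence only as a by-product of the explicit identification; your route separates the two issues cleanly and explains \emph{why} coherence is restored at the cubic level while it fails for $\Delta_2$. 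Your remark about the missing within-block contribution in the printed $[J_{\Delta_3}]_{3k+1}$ and $[J_{\Delta_3}]_{3k+2}$ formulas (and the index $k$ vs.\ $q$ inside the sums) is also well taken---those are typographical slips in the paper that you would need to correct when carrying out step~(ii).
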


\section{Conclusion and perspectives}

The previous formalism includes the classical numerical methods in a unified point of view called {\it discrete embedding formalisms} which is itself an illustrative example of the philosophy of {\it embedding formalisms}. Doing so, we obtain others formulations of classical results like the discrete calculus of variations, the discrete Euler-Lagrange equation, discrete integration by part, etc which allows us to clearly put in evidence the correspondence between the continuous case and the discrete case. As explained in the last Section, this procedure is not limited to first order methods but can be used for arbitrary high order. Moreover, the strategy can also be adapted in order to consider some particular equations based on very specific differential operators. An interesting problem with respect to our approach is to give a full categorical formulation of discrete embedding formalisms. 

\begin{appendix}

\section{Proof of Theorem \ref{delta2}}

The proof is based on explicit computations of the different quantities. \\

The discrete $\Delta_2$ differential embedding of (\ref{eqdiff}) is given by $\Delta_2 X = f(T,X)$ which reduces to
\begin{align*}
\frac{2(X_{2k+1}-X_{2k})}{h}-\frac{(X_{2k+2}-X_{2k})}{2h} &=f(T_{2k},X_{2k}), \\
\frac{(X_{2k+2}-X_{2k})}{2h} &=f(T_{2k+1},X_{2k+1}),
\end{align*}
for $k=0,...,d-1$.\\

The discrete $\Delta_2$ integral embedding of (\ref{eqint}) is given by $X=\mathbb{X}_0 +J_{\Delta_2} (f(T,X))$ which gives
\begin{equation}
X_{2k} = X_0 + \sum_{q=0}^{k-1} 2h f(T_{2q+1},X_{2q+1}),
\end{equation}
for $k=1,...,d$ and 
\begin{equation}
X_{2k+1} = X_0 + h\frac{f(T_{2k},X_{2k})+f(T_{2k+1},X_{2k+1})}{2} + 2h\sum_{q=0}^{k-1}f(T_{2q+1},X_{2q+1}),
\end{equation}
for $k=0,...,d-1$.\\

As a consequence, the discrete $\Delta_2$-differential embedding is equivalent to
\begin{align*}
X_{2k+1} &=X_{2k} + h \frac{f(T_{2k},X_{2k})+f(T_{2k+1},X_{2k+1})}{2},\ \mbox{\rm for} k=0,...,d-1,
\end{align*}
which is the classical one step {\it trapezoidal scheme} in Numerical Analysis (see \cite{dema},III.1.2 (c) p.63)and also
\begin{align*}
X_{2k+2} &=X_{2k} + 2hf(T_{2k+1},X_{2k+1})
\end{align*}
which is the classical one step {\it forward midpoint scheme} in Numerical Analysis (see \cite{dema},III.1.2 (a) p.61). This concludes the proof.

\section{Proof of Theorem \ref{delta3}}

The proof of Theorem \ref{delta3}  is based on explicit computations. The discrete $\Delta_3$ differential embedding of (\ref{eqdiff}) is given by $\Delta_3 X = f(T,X)$ which reduces to
\begin{align*}
-3\frac{(F_{3k+2}-2F_{3k+1}+F_{3k})}{2h}-\frac{(F_{3k+3}-F_{3k})}{3h}&=f(T_{3k},X_{3k}), \\
-\frac{(F_{3k+1}-2F_{3k+2}+F_{3k})}{2h}-\frac{(F_{3k+3}-F_{3k})}{6h}&=f(T_{3k+1},X_{3k+1}), \\
\frac{(F_{3k+3}-2F_{3k+1}+F_{3k+2})}{2h}-\frac{(F_{3k+3}-F_{3k})}{6h}&=f(T_{3k+2},X_{3k+2}),
\end{align*}
for $k=0,...,d-1$.\\

The discrete $\Delta_3$ integral embedding of (\ref{eqint}) is given by $X=\mathbb{X}_0 +J_{\Delta_3} (f(T,X))$ which gives
\begin{equation}
X_{3k} = X_0 + \sum_{q=0}^{k-1}3h \frac{\left(f(T_{3k},X_{3k}) + 3f(T_{3k+2},X_{3k+2})\right)}{4},
\end{equation}
for $k=1,...,d$ and
\begin{align*}
X_{3k+1} &= X_0 + \sum_{q=0}^{k-1}\frac{h}{12}\left(5 f(T_{3k},X_{3k}) + 8 f(T_{3k+1},X_{3k+1})-f(T_{3k+2},X_{3k+2})\right), \\
X_{3k+2} &= X_0 + \sum_{q=0}^{k-1}\frac{h}{3}\left(f(T_{3k},X_{3k})+4f(T_{3k+1},X_{3k+1})+f(T_{3k+2},X_{3k+2})\right),
\end{align*}
for $k=0,...,d-1$.\\

As a consequence, we obtain for $k=0,...,d-1$
\begin{align*}
X_{3k+2} &=X_{3k} + \frac{h}{3}\left(f(T_{3k},X_{3k})+4f(T_{3k+1},X_{3k+1})+f(T_{3k+2},X_{3k+2})\right),
\end{align*}
which is the classical {\it forward Simpson scheme} in Numerical Analysis (see \cite{dema}, III.1.2 (c) p.63). This concludes the proof.
\end{appendix}

\bibliographystyle{plain}

\end{document}